\appto\appendix{\addtocontents{toc}{\protect\setcounter{tocdepth}{0}}}
\def\Z{{\mathbb Z}}
\def\A{{\mathbb A}}
\def\SL{{\rm SL}}
\def\GL{{\rm GL}}
\def\Adj{{\rm Adj}}
\def\Sym{{\rm Sym}}
\def\O{{\mathcal O}}
\newcommand{\cO}{{\mathcal O}}
\def\P{{\mathbb P}}
\def\Aut{{\rm Aut}}
\def\Pic{{\rm Pic}}
\def\F{{\mathbb F}}
\renewcommand{\L}{\mathcal{L}}
\def\FF{{\mathcal F}}
\def\Q{{\mathbb Q}}
\def\H{{\mathcal H}}
\def\Z{{\mathbb Z}}
\def\G{{\mathbb G}}
\def\P{{\mathbb P}}
\def\F{{\mathbb F}}
\def\Q{{\mathbb Q}}
\def\C{{\mathbb C}}
\def\H{{\mathcal H}}
\def\Sym{{\text{Sym}}}
\def\fz1{{F_{\Z,1}}}
\def\Proj{{\rm Proj}}
\def\I{{\mathcal{I}}}
\DeclareMathOperator {\spec} {\mathrm{Spec}\,}
\def\pp{{\mathfrak{p}}}
\newtheorem{theorem}{Theorem}%[section]
\newtheorem{corollary}[theorem]{Corollary}
\newtheorem{lemma}[theorem]{Lemma}
\newtheorem{remark}[theorem]{Remark}
\newtheorem{proposition}[theorem]{Proposition}
\newenvironment{proof}{\noindent {\bf Proof:}}{$\Box$ \vspace{2 ex}}
\definecolor{darkpurple}{rgb}{0.5, 0.0, 0.5}
\newif\ifshowcontent
\definecolor{darkpurple}{rgb}{0.5, 0.0, 0.5}
\newif\ifshowcontent
\bfseries\color{darkpurple} \textit{Note:} \BODY}
\definecolor{lightpurple}{rgb}{0.85, 0.75, 0.95}  % a soft light purple
\title{Finiteness theorems for some representations of $\GL_3$}
\author{Fatemehzahra Janbazi and Arul Shankar}
\date{}
\begin{document}
\maketitle

\begin{abstract}
Let $n\geq 2$ be an integer and let $K$ be a number field with ring of integers $\cO_K$. We prove that the set of ternary $n$-ic forms forms with coefficients in $\cO_K$ and fixed nonzero discriminant, breaks up into finitely many $\GL_3(\cO_K)$-orbits. This generalizes the result of Birch--Merriman \cite{BM} in the binary forms case.
We also prove a similar finiteness result on the $\GL_3(\cO_K)$-orbits of the $27$-dimensional representation of $\GL_3$ with highest weight $(4,2)$.
\end{abstract}

\section{Introduction}

A classical result of Birch and Merriman \cite{BM} states that, for fixed $n$, the set of integral binary $n$-ic forms with a fixed nonzero discriminant breaks up into finitely many $\GL_2(\Z)$-orbits. They also prove the analogous result over arbitrary number fields. 
In this article, we consider the space of ternary $n$-ic forms.
For $n\geq 2$, and a ring $R$, let $V_n(R)$ denote the set of ternary $n$-ic forms (i.e., homogeneous degree-$n$ polynomials in three variables) with coefficients in $R$. The group $\GL_3(R)$ acts on $V_n(R)$ via linear change of variables: $\gamma\cdot f(x,y,z)=f((x,y,z)\cdot \gamma)$. When $n=2$, this is the (prehomogeneous) representation of $\GL_3(R)$ on ternary quadratic forms. When $n=3$, the representation of $\GL_3(R)$ acting on ternary cubic forms is no longer prehomogeneous, but is coregular - the ring of relative polynomial invariants is freely generated by two elements. As $n$ grows, the invariant theory becomes very complicated, and explicit descriptions of the rings of invariants, much beyond Hilbert's finite generation theorem, are unknown. 

However, for all $n$, there is a classically known relative  polynomial invariant for the action of $\GL_3(R)$ on $V_n(R)$, namely, the discrimiant. We denote the discriminant by 
$\Delta_n\in\Z[V_n]$; it is a degree-$3(n-1)^2$ polynomial with integer coefficients. Then we prove the following result.

\begin{theorem}\label{thm1}
Let $n\geq 2$ be a fixed integer. Let $K/\Q$ be a number field with ring of integers $\O_K$, and let $0\neq D\in\O_K$ be an algebraic integer. Then the set of elements in $V_n(\cO_K)$ with discriminant $D$ breaks up into finitely many $\GL_3(\cO_K)$-orbits.
\end{theorem}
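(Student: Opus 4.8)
The plan is to exploit that a ternary $n$-ic form $f$ with $\disc(f)=D\neq 0$ cuts out a curve $C_f=\{f=0\}\subset\P^2$ that is smooth: if $S$ denotes the finite set of places of $K$ consisting of the archimedean places together with those dividing $D$, then $C_f$ spreads out to a smooth proper curve over $\cO_{K,S}$, i.e.\ has good reduction outside $S$. Two forms lie in the same $\GL_3(\cO_K)$-orbit essentially when the corresponding plane curves, together with their hyperplane bundle $\cO_{C_f}(1)$, are isomorphic. The crucial point is that, unlike the prehomogeneous or stable situations in which fixing every invariant pins down the complex orbit, once $n\geq 4$ the discriminant is only one of many invariants; indeed, already over $\R$ there are infinitely many $\GL_3(\R)$-orbits of fixed nonzero discriminant. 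The finiteness is therefore genuinely arithmetic and must use good reduction. I would split the argument into a geometric pillar (finiteness of the underlying curves up to $\PGL_3(K)$) and an arithmetic pillar (passage to integral orbits of discriminant exactly $D$).

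For the geometric pillar, I would show that the forms $f\in V_n(K)$ for which $C_f$ has good reduction outside $S$ fall into finitely many $\PGL_3(K)$-orbits. A smooth plane curve of degree $n$ has genus $g=\binom{n-1}{2}$, and by adjunction $\cO_{C_f}(1)^{\otimes(n-3)}\cong\omega_{C_f}$; hence for $n\geq 4$ the embedding line bundle is an $(n-3)$-th root of the canonical bundle, determined by $C_f$ up to the finite group of $(n-3)$-torsion points on the Jacobian of $C_f$. Since $h^0(\cO_{C_f}(1))=3$ for all $n\geq 2$, the plane model is then determined by the line bundle up to $\PGL_3$. Thus it suffices to bound the abstract curves: finiteness of genus-$g$ curves over $K$ with good reduction outside $S$ is the Shafarevich conjecture, known by Faltings for $g\geq 2$ (so for $n\geq 4$) and classically for $g\leq 1$. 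To descend from $\overline{K}$-models to $K$-forms I would count twists, controlled by $H^1(G_{K,S},\Aut(C_f,\cO_{C_f}(1)))$; for $n\geq 4$ this automorphism group is finite, so the set is finite by the Hermite--Minkowski bound on extensions of $K$ of bounded ramification. The cases $n=2,3$ (ternary quadratic forms, and ternary cubics / genus-one curves carrying a degree-$3$ line bundle) are handled by the corresponding classical finiteness statements.

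For the arithmetic pillar, fix one of these finitely many $\PGL_3(K)$-orbits together with a representative $f_0$. I would show that the integral forms in $V_n(\cO_K)$ of discriminant exactly $D$ lying in the $\GL_3(K)$-orbit of $f_0$ constitute finitely many $\GL_3(\cO_K)$-orbits. This is the classical ``finitely many integral orbits inside one rational orbit'' phenomenon: the stabilizer $\Stab_{\GL_3}(f_0)$ is a finite group (the linear automorphisms of $C_{f_0}$ fixing $f_0$, an extension of a subgroup of the projective automorphism group by $\mu_n$), and the integral orbits inside the rational one are parametrized by a subset of a class set for this stabilizer, finite because the class number of $K$ and the $S$-unit group are finite. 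The requirement that the discriminant equal $D$ exactly, rather than up to units, is imposed using the center $\G_m\subset\GL_3$, again via finiteness of the unit group. Combining the two pillars yields finitely many $\GL_3(\cO_K)$-orbits with $\disc=D$.

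The main obstacle is the geometric pillar, and within it two points deserve care. First, one must invoke the Shafarevich conjecture (Faltings) for $n\geq 4$ and reconcile the abstract good-reduction statement with the integral plane model and the choice of embedding line bundle uniformly in $n$, including the genus $\leq 1$ exceptions. Second, the twist count must genuinely use restricted ramification: without the good-reduction constraint $H^1(K,\Aut)$ is typically infinite, and it is only its intersection with the classes unramified outside $S$ that is finite. By contrast, the arithmetic pillar is, modulo the standard machinery of class sets and finite stabilizers, routine.
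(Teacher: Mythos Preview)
Your proposal is correct and shares the same diophantine core as the paper (Faltings' theorem for $n\geq 4$), but the packaging differs in two places. First, for the geometric pillar the paper works over $\C$ rather than over $K$: it invokes the uniqueness of the $g_n^2$ on a smooth plane curve of degree $n\geq 4$ to conclude directly that two forms cutting out isomorphic curves are $\GL_3(\C)$-equivalent, bypassing your adjunction-plus-torsion count of line bundles and your $H^1(G_{K,S},\Aut)$ twist step (the latter is in fact unnecessary once you have Faltings over $K$ and know the line bundle is determined up to finitely many choices). Second, for the arithmetic pillar the paper observes that $\GL_3(\C)$-equivalence together with fixed discriminant forces $\SL_3(\C)$-equivalence, hence equality of \emph{all} polynomial invariants, and then applies Borel--Harish-Chandra in the form of Corollary~\ref{cor:BHC}; this replaces your class-set argument, though the two are of course closely related. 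The paper's route is a bit slicker since the unique-$g_n^2$ statement pins down the embedding exactly rather than up to torsion, and the descent to integral orbits is a black-box citation. One place where your proposal is genuinely vague is $n=3$: you say ``classical finiteness statements'' without naming the input, whereas the paper uses Siegel's theorem on $S$-integral points of the elliptic curve $4I^3-J^2=27D$ to bound the possible pairs $(I,J)$ directly, after which Borel--Harish-Chandra again applies.
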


The significance of the discriminant is as follows. Let $k$ be a field, and let $f\in V_n(k)$ be a nonzero element. We denote the curve cut out by $f$ in $\P^2_k$ by $C_f:=\Proj\; k[x,y,z]/(f)$. Then $C_f$ is a smooth curve (of genus $(n-1)(n-2)/2$) if and only if $\Delta_n(f)$ is nonzero. Let $K$ be a number field, let $\pp$ be a finite prime in $\O_K$, and let $\O_{\pp}$ denote the ring of integers of $K_{\pp}$. We say that an element $f\in V_n(\O_{\pp})$ has {\it good reduction} at $\frak{p}$ if $\Delta_n(f)$ is a unit in $\O_{\pp}$ (equivalently, if $C_{\overline{f}}$ is a smooth curve over $\O_{\pp}/\pp$, where $\overline{f}$ denotes the reduction of $f$ modulo $\pp$). 
Given a finite set of places $S$ of $K$, we denote the ring of $S$-integers in $K$ by $\O_{K,S}$. Then we have the following result.

\begin{theorem}\label{thm2}
Let $n\geq 2$ be a fixed integer. Let $K/\Q$ be a number field and let $S$ be a finite set of places of $K$ containing all the infinite places. Then the set of elements in $V_n(\O_{K,S})$, with good reduction outside $S$, breaks up into finitely many $\GL_3(\O_{K,S})$-orbits.
\end{theorem}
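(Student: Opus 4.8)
The plan is to deduce Theorem~\ref{thm2} from Theorem~\ref{thm1} by rigidifying the discriminant with the $S$-unit theorem and then passing to $\O_K$-integral models. Throughout I will use the transformation law of the discriminant: since $\Delta_n$ is a relative invariant, $\Delta_n(\gamma\cdot f)=(\det\gamma)^{w}\Delta_n(f)$ for some weight $w$, and evaluating on the scalar $\diag(\lambda,\lambda,\lambda)$ — which scales $f$ by $\lambda^{n}$ and hence the degree-$3(n-1)^2$ polynomial $\Delta_n$ by $(\lambda^3)^{n(n-1)^2}$ — identifies $w=n(n-1)^2$.

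First I would remove the discriminant up to a finite ambiguity. If $f\in V_n(\O_{K,S})$ has good reduction outside $S$ then $\Delta_n(f)\in\O_{K,S}^{\times}$, and by Dirichlet's $S$-unit theorem $\O_{K,S}^{\times}$ is finitely generated, so $\O_{K,S}^{\times}/(\O_{K,S}^{\times})^{w}$ is finite. Choosing coset representatives $D_1,\dots,D_m$ and observing that $\diag(a,1,1)$ with $a\in\O_{K,S}^{\times}$ multiplies $\Delta_n$ by $a^{w}$, one may act so that $\Delta_n(f)\in\{D_1,\dots,D_m\}$. Thus it suffices, for a fixed $S$-unit $D$, to show that $\{f\in V_n(\O_{K,S}):\Delta_n(f)=D\}$ breaks into finitely many $\GL_3(\O_{K,S})$-orbits.

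Next I would descend to $\O_K$. At each finite $\pp\in S$ one clears denominators by a local $\gamma_{\pp}\in\GL_3(K_{\pp})$, chosen to minimize $v_{\pp}(\Delta_n)$ among integral representatives, and patches these with the identity elsewhere — using that $\O_{K,S}$ is a Dedekind domain together with strong approximation for $\SL_3$ and the finiteness of $\Cl(K)$ to fix the determinant — to obtain $\gamma\in\GL_3(\O_{K,S})$ with $g:=\gamma\cdot f\in V_n(\O_K)$. Since $f$ already had unit discriminant at every $\pp\notin S$, the model $g$ is integral everywhere and $D':=\Delta_n(g)$ is supported on the primes of $S$. If the $D'$ ran over a finite set, then, as $\GL_3(\O_K)\subset\GL_3(\O_{K,S})$, applying Theorem~\ref{thm1} to each value of $D'$ would finish the argument.

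The step I expect to be the genuine obstacle is exactly the boundedness of $D'$. Fixing $D$ does not bound the minimal integral discriminants $\delta_{\pp}:=v_{\pp}(D')$ at the primes of $S$: being an $S$-unit, $D$ is invertible at those primes and imposes no constraint there, while $D'$ and $D$ differ only by $(\det\gamma)^{w}$ with $\det\gamma\in\O_{K,S}^{\times}$, so $\delta_{\pp}$ is free to be any nonnegative integer congruent to $v_{\pp}(D)$ modulo $w$. That the $\delta_{\pp}$ occurring are nonetheless bounded is a global, Shafarevich-type finiteness localized at $S$ — concretely, for $n=3$ the curve $C_f$ has genus one and a form of multiplicative reduction type $I_{\delta_{\pp}}$ realizes arbitrarily large $\delta_{\pp}$ locally, so only finitely many such forms can exist globally. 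For this reason Theorem~\ref{thm2} is not a purely formal consequence of Theorem~\ref{thm1}, and I would handle it by re-running the proof of Theorem~\ref{thm1} directly over $\O_{K,S}$ rather than invoking it as a black box: the mechanism behind Theorem~\ref{thm1} — a fixed discriminant forcing the arithmetic data of the singular locus of $C_f$ into finitely many configurations, ultimately via an $S$-unit equation or height inequality — is insensitive to inverting the primes of $S$ and applies at those primes verbatim, even though good reduction is no longer assumed there. Establishing this $S$-integral form of Theorem~\ref{thm1}, i.e.\ taming the finitely many primes of $S$ where the local orbit space is infinite, is the technical heart; granting it, the first two paragraphs deliver Theorem~\ref{thm2}.
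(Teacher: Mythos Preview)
Your first paragraph --- normalizing the discriminant modulo $w$-th powers of $S$-units --- is correct and is exactly how the paper handles $n=3$. Your second paragraph is a dead end, and to your credit you recognize this in the third paragraph. The gap is that the third paragraph never actually carries out the proof it promises, and moreover misidentifies what that proof would be.

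You describe the mechanism of Theorem~\ref{thm1} as ``a fixed discriminant forcing the arithmetic data of the singular locus of $C_f$ into finitely many configurations, ultimately via an $S$-unit equation or height inequality.'' This is not the mechanism. For $n\ge4$ the curves $C_f$ with $\Delta_n(f)\ne0$ are \emph{smooth} of genus $\ge3$; there is no singular locus to analyze, and no unit equation enters. The diophantine input is Faltings' theorem: curves of genus $\ge2$ over $K$ with good reduction outside $S$ fall into finitely many isomorphism classes. One then needs the geometric fact (Proposition~\ref{prop:curve_to_form}, using the uniqueness of the $g^2_n$ on a smooth plane curve of degree $\ge4$) that isomorphic smooth plane curves give $\GL_3(\C)$-equivalent forms. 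This already shows that the good-reduction set breaks into finitely many $\GL_3(\C)$-orbits.

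The passage from finitely many $\GL_3(\C)$-orbits to finitely many $\GL_3(\O_{K,S})$-orbits also requires more than fixing the discriminant. Borel--Harish-Chandra (Corollary~\ref{cor:BHC}) needs \emph{all} polynomial invariants fixed, not just $\Delta_n$. The paper handles this via Lemma~\ref{finite fiber pi}: the natural map $\O_{K,S}^\times\backslash\I(\O_{K,S})'\to\C^\times\backslash\I(\C)$ has finite fibers, so finitely many $\GL_3(\C)$-orbits force the full invariant tuple $\iota(f)$ into a finite subset of $\I(\O_{K,S})$ after acting by $\GL_3(\O_{K,S})$. Your discriminant normalization alone does not achieve this for $n\ge4$, where the invariant ring is much larger than $\Z[\Delta_n]$.
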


We make the following remarks.
\begin{remark}{\rm 
{\bf (1).} Our two main results are analogues of Theorems 1 and 2 from \cite{BM}, with binary forms replaced by ternary forms and $\GL_2$ replaced with $\GL_3$. Our proof follows a somewhat similar structure, where we use Faltings' theorem as a diophantine input instead of finiteness of the unit equation used by Birch--Merriman.

\vspace{.05in}

\noindent {\bf (2).} As in the case of binary forms, finiteness of the set of orbits when {\it all} the invariants are fixed follows by reduction theory methods (carried out in great generality by Borel--Harishchandra \cite{BHC}). The novelty of the results of Birch--Merriman and ours is that only one particular invariant, namely the discriminant, needs to be fixed to ensure finiteness.

\vspace{.05in}

\noindent {\bf (3).} In our results, we consider $\GL_3(\cO_K)$-orbits. However, finiteness also holds in Theorem \ref{thm1} for $\SL_3(\cO_K)$ orbits, as will be clear from the proof. For Theorem \ref{thm2}, finiteness does not hold for $\SL_3(\O_{K,S})$ orbits. Indeed, if $f\in V_n(O_{K,S})$ has good reduction outside $S$, and $q\in\cO_K$ is an element divisible only by primes in $S$, then $q^k f$ also has good reduction outside $S$ for all $k$. And these elements are all pairwise $\SL_3(\cO_{K,S})$-inequivalent, since they have different disriminants.

}\end{remark}

Our next finiteness result concerns the irreducible representation $V_{2,2}$ of $\GL_3$ with highest weight $(4,2)$. For any ring $R$, the set of $R$-points of this representation is given by
\begin{equation*}
V_{2,2}(R) = \bigl(\Sym^2(R^3)\otimes\Sym^2(R^3)\bigr)/I,
\end{equation*}
where, if $\Sym^2(R^2)\otimes\Sym^2(R^2)$ is written as bihomogeneous degree-$(2,2)$ polynomials in $x_1,x_2,x_3$ and $z_1,z_2,z_3$, then $I$ is the ideal of all multiples of $\sum x_iz_i$. That is, we have
\begin{equation*}
I = \Bigl(\sum_{i=1}^3x_iz_i\Bigr)\cdot k[x_1,x_2,x_3]\otimes k[z_1,z_2,z_3]\cap k[x_1,x_2,x_3]_2\otimes k[z_i,z_2,z_3]_2.
\end{equation*}
The action of $\GL_3(R)$ on $V_{2,2}(R)$ is given by
\begin{equation*}
\gamma\cdot f(x_1,x_2,x_3,z_1,z_1,z_3) = f((x_1,x_2,x_3)\cdot \gamma,
(z_1,z_2,z_3)\cdot\delta(\gamma)),
\end{equation*}
where $\delta(\gamma)$ denotes the cofactor matrix of $\gamma$.

Let $k$ be a field of characteristic not equal to $2$. We explain in \S2.2 that an elements $v$ in $V_{2,2}(k)$ cuts out a subvariety $X_v$ in the flag variety $F_k=F(0,1,2;k)$. The two projection maps $F_k\to\P^2_k$ restrict to two maps $X_v\to\P^2_k$. It is known that generically, this subvariety is a K3 surface and that the two maps  are (degree-$2$) covering maps to $\P^2$. We say that an element $v\in V_{2,2}(k)$ is {\it generic} if $X_v$ is a K3 surface in $F_k$, and the two maps $p_1,p_2$ from $X_v$ to $\P^2_k$ are covering maps. If $K$ is a number field and $\pp$ is a finite prime of $K$ with norm relatively prime to $2$, we say that $v\in V_{2,2}(\cO_{K,\pp})$ has {\it good reduction} at $\pp$ if $\bar{v}\in V_{2,2}(\F_\pp)$ is generic. We will also restrict ourselves to elements $v\in V_{2,2}(\cO_K)$ such that $X_v$ has Picard rank $2$ over the complex numbers. This last condition can be easily enforced by picking a prime $\pp$ of $K$, and restricting to those elements $v\in V_{2,2}(\cO_K)$ such that the reduction $\bar{v}$ of $v$ satisfies $X_{\bar{v}}$ having Picard rank $2$ (see the discussion above Theorem 1 in \cite{FC}).

We then have the following result.

\begin{theorem}\label{thm22}
Let $K/\Q$ be a number field and let $S$ be a finite set of places of $K$ containing all the infinite places and all the places above $2$. Then the set of elements in $V_{2,2}(\O_{K,S})$, with good reduction outside $S$, and such that $X_v$ has Picard rank $2$, breaks up into finitely many $\GL_3(\O_{K,S})$-orbits.
\end{theorem}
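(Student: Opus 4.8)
The plan is to run the same three-step template as in the proof of Theorem~\ref{thm2}, replacing the smooth plane curve $C_f$ by the K3 surface $X_v$ and Faltings' theorem by a Shafarevich-type finiteness statement for K3 surfaces. To $v\in V_{2,2}(\O_{K,S})$ with good reduction outside $S$ I would attach the lattice-polarized K3 surface $(X_v,L_1,L_2)$, where $L_i=p_i^*\O_{\P^2}(1)$ are the pullbacks of $\O(1)$ along the two degree-$2$ projections $p_1,p_2\colon X_v\to\P^2$. The Picard rank $2$ hypothesis guarantees that $L_1,L_2$ span a finite-index sublattice of $\Pic(X_v)$ with a fixed intersection form (computable from the bidegree-$(2,2)$ geometry), so that $(X_v,L_1,L_2)$ has a fixed deformation type. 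I note in passing that the branch locus of $p_1$ is the ternary sextic $\disc_z(v)$, so one might hope to apply Theorem~\ref{thm2} with $n=6$; but this sextic determines only $X_v$ together with a single map to $\P^2$, whereas $v$ encodes the full embedding $X_v\hookrightarrow F_K$, so the two-polarization datum is genuinely needed.

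The first step is to show that good reduction of $v$ outside $S$ is equivalent to good reduction of $(X_v,L_1,L_2)$ outside $S$: for $\pp\notin S$ the genericity of $\bar v$ makes $X_{\bar v}$ a smooth K3 surface over the residue field, so $X_v$ spreads out to a smooth proper model over $\O_\pp$ and the $L_i$ extend, giving good reduction at $\pp$. The diophantine heart of the argument is then the finiteness theorem for K3 surfaces of Picard rank $2$ with good reduction outside $S$ (the instance of the Shafarevich conjecture recalled in the discussion above Theorem~1 of \cite{FC}): after possibly enlarging $S$ by finitely many places to control fields of definition, this produces only finitely many isomorphism classes of the polarized surfaces $(X_v,L_1,L_2)$ that can occur.

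It then remains to bound, for each fixed class $(X,L_1,L_2)$, the set of $v$ inducing it modulo $\GL_3(\O_{K,S})$. For this I would use the dictionary recovering $v$ from the pair of maps $p_i\colon X\to\P^2$, each determined by $L_i$ and an identification $H^0(X,L_i)^\vee\cong K^3$; the incidence relation defining $F$ forces these two identifications to be cofactor-conjugate, so the residual freedom is exactly one copy of $\GL_3$ acting as in the theorem, modulo the finite group $\Aut(X,L_1,L_2)$. The passage from geometric to $K$-rational forms is then governed by $H^1\!\bigl(K,\Aut(X,L_1,L_2)\bigr)$, which becomes finite once we impose good reduction outside $S$ (finiteness of torsors under a finite group scheme with bounded ramification, via Hermite--Minkowski); and within each resulting $\GL_3(K)$-orbit the $S$-integral representatives split into finitely many $\GL_3(\O_{K,S})$-orbits by the reduction theory of Borel--Harish-Chandra (cf.\ part~(2) of the Remark above), since the stabilizer is an arithmetic group and all invariants are now fixed. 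Assembling these finiteness statements yields the theorem. I expect the principal obstacle to lie in the reconstruction step: proving that the $\GL_3(\O_{K,S})$-orbit of $v$ is pinned down, up to the finite ambiguities above, by $(X_v,L_1,L_2)$ --- in particular controlling how the integral structure on $H^0(X,L_1)$ interacts with the cofactor duality linking the two projections --- and in verifying that the good-reduction dictionary is tight enough for the cited K3 finiteness input to apply directly.
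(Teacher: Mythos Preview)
Your approach differs substantially from the paper's, and the contrast is instructive.

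The paper does \emph{not} invoke any Shafarevich-type result for K3 surfaces. It follows exactly the route you consider and set aside: it applies Theorem~\ref{thm2} with $n=6$ to the branch sextics. You are right that one branch curve determines only one projection; the paper uses \emph{both}. The covariants $I_x,I_z$ of \S2.3 produce two integral ternary sextics from $v$, and good reduction of $v$ forces good reduction of each (Propositions~\ref{prop_const_covariants} and~\ref{prop_branch_covariants}), so Theorem~\ref{thm2} bounds both branch curves to finitely many $\GL_3(\O_{K,S})$-orbits. The substantive work is then Theorem~\ref{thm:FFCC_fin} (all of \S4.1): fixing both branch curves up to $\GL_3(\C)$ leaves only finitely many $\GL_3(\C)$-orbits of generic K3 surfaces in $F_\C$. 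This uses Catanese's resolution of the Chisini conjecture to bound the double-plane structures over each branch curve, and then a careful analysis (Proposition~\ref{prop:tau_choices}) of the automorphism $\tau=\phi_1^{-1}\circ\phi_2$ comparing the two resulting identifications $X\cong X'$. The Picard-rank-$2$ hypothesis enters precisely here, to pin down the action of $\tau^*$ on the rank-$2$ lattice $\Pic(X)$. The passage from $\GL_3(\C)$-orbits to $\GL_3(\O_{K,S})$-orbits then runs verbatim as in the proof of Theorem~\ref{thm2}.

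Your strategy is plausible but trades the geometric work of \S4.1 for a much heavier diophantine input: the Shafarevich conjecture for polarized K3 surfaces (Andr\'e, She, Takamatsu). This is not what \cite{FC} provides---that reference in the paper is about enforcing Picard rank $2$ via reduction at a single prime, not any finiteness theorem---so you would need to import and cite the K3 Shafarevich theorem separately, and check that the paper's notion of good reduction for $v$ yields good reduction of the polarized surface in the sense those results require. With that granted, your reconstruction step is essentially correct: the $\GL_3\times\GL_3$ freedom in choosing bases for the two $H^0(X,L_i)$, cut down by the requirement that the image lie in a fixed smooth $(1,1)$-divisor, does collapse to a single diagonal $\GL_3$, and the endgame via $H^1$ of the finite group $\Aut(X,L_1,L_2)$ and Borel--Harish-Chandra is as you describe. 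In short, the paper keeps the diophantine input at the level of curves (Faltings only) at the cost of \S4.1; your proposal front-loads a stronger theorem and streamlines the geometry.
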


We introduce a new framework allowing us to obtain finiteness results on orbits of representations from certain classical diophantine results: Let $V$ be a finite dimensional representation of an algebraic group $G$, and let $\Delta\in\Z[V]$ be a polynomial relative invariant for the action of $G$ on $V$. Suppose for simplicity we are trying to prove that there are finitely many $G(\Z)$-orbits on $V(\Z)$ with invariant $\Delta$ fixed to be some nonzero integer. This can be done with the following four ingredients:
\begin{enumerate}
\item For fields $k$, we require $G(k)$-orbits on $V(k)$ to correspond to some geometric objects (say some sort of varieties over $k$). Moreover, we want $\Delta=0$ to correspond to objects that are degenerate in some way.
\item A diophantine input, which proves that the set of isomorphism classes of these objects over $\Q$ with non-degenerate reduction outside a fixed finite set of places is finite.
\item A geometric input, which proves that elements in $V(\C)$ with the same invariant $\Delta$ and corresponding to isomorphic geometric objects lie inside finitely many $G(\C)$-orbits.
\item A geometry-of-numbers input, which proves that a $G(\C)$-equivalent set of elements in $V(\Z)$ with fixed invariant $\Delta$ must break up into finitely many $G(\Z)$-orbits.
\end{enumerate}

Birch and Merriman's proof of their main result is easy to write in this framework: nonzero binary $n$-ic forms with coefficients in $\Q$ give rise to dimension-$0$ degree-$n$ subschemes of $\P^1_\Q$, yielding degree-$n$ \'etale algebras over $\Q$. Hermite's theorem implies that the number of possibilities for these \'etale algebras is finite, once the discriminant of the binary $n$-ic forms has been fixed.
The finiteness of the unit equation gives us Step 3, namely, that there are finitely many $\GL_2(\C)$-equivalence classes (in fact $\GL_2(\overline{\Q})$-equivalence classes) on the set of binary $n$-ic forms with fixed discriminant and corresponding to the same \'etale algebra over $\Q$. Indeed, the binary $n$-ic form factors over $\overline{\Q}$, and the action of $\SL_2(\overline{\Q})$ can be used to move three of its roots to $0$, $1$, and $\infty$ in $\P^1_{\overline{Q}}$. Since the discriminant is fixed, it is easy to see that the remaining roots have only finitely many possibilities -- they correspond to solutions to a unit equation. Finally, Step 4 can be deduced from an application of Borel--Harish-Chandra \cite{BHC}. (In fact Borel--Harish-Chandra can be used to obtain Step 4 whenever the following criteria apply: 1. $G$ is a reductive group; 2. elements with $\Delta\neq 0$ are stable (in the sense of GIT); and 3. the stabilizer in $G(\C)$ of generic elements of $V(\C)$ is finite.)

Our proof of Theorem \ref{thm1} (on the representations $V_n$ of $\GL_3$) is also easy to summarize in this framework. Assume that $n\geq 3$. (The case $n=1$ is trivial, and the case $n=2$ follows from the therory of quadratic forms.) For a field $k$, a $\GL_3(k)$-orbit on $V_n(k)$ yields a degree-$n$ curve in $\P^2_k$, which is smooth if and only if the discriminant is nonzero. Our diophantine input is Faltings' theorem when $n\geq 4$ and Siegel's theorem on the finiteness of integral points on elliptic curves when $n=3$. It is easy to prove that for $n\geq 4$, two complex ternary $n$-ic forms forms cutting out isomorphic smooth curves in $\P^2_\C$ are $\GL_3(\C)$-equivalent, yielding Step 3. When $n=3$, we obtain Step 3 directly from the invariant theory of ternary cubic forms. Step 4 once again is an application of Borel--Harish-Chandra. To prove Theorem \ref{thm2} only requires a small modification of this framework. The issue is that there are possibly infinitely many values for the discriminant of a ternary form with good reduction outside a fixed finite set of places. It is necessary to use the $\G_m$-action to cut this down to a finite set. The specific results required for this modified framework to work are proved in \S3.2.

The proof of Theorem \ref{thm22} proceeds as follows: The Borel--Weil realization of $V_{2,2}$ implies that generic $\GL_3(k)$-orbits on $V_{2,2}(k)$ yield K3 surfaces $X$ in the flag variety $F_k$, along with two double coverings $X\to\P^2_k$. For our diophantine input, we use Faltings' theorem to obtain that the isomorphism classes of the two branch curves of these two maps to $\P^2$ have only finitely many choices. Our main technical input is Step 3: we prove that the set of K3 surfaces $X$ in $F_\C$ such that the isomorphism classes of the branch covers of the two maps $X\to\P^2_\C$ are fixed is finite. This is accomplished in \S4.1. The rest of the proof follows identically to the proof of Theorem \ref{thm2}.

\subsection*{Acknowledgments}
We are very grateful to Simone Coccia, Ananth Shankar, Jacob Tsimerman, and Charlie Wu for many helpful conversations. We are very grateful to an anonymous referee for pointing out a hole in the proof of Proposition \ref{prop:tau_choices} in a previous version of this article. The second named author was supported by an NSERC grant and a Simons fellowship.

\section{The representations $V_n$ and $V_{2,2}$ of $\GL_3$}

In this section, we collect some preliminary results on the geometry and invariant theory of the representations $V_n$ and $V_{2,2}$ of $\GL_3$. Specifically, we associate curves to elements in these representations, and introduce polynomial invariants which determine smoothness of these geometric objects.

\subsection{Plane curves corresponding to elements in $V_n$}

Let $R$ be a ring. Recall that $V_n(R)$ denotes the set of homogeneous degree-$n$ polynomials in $R[x,y,z]$, and that $\GL_3(R)$ acts on $V_n(R)$ via linear change of variables. To any element $f\in V_n(R)$, we associate the scheme $C_f=C_{f,R}:=\Proj(R[x,y,z]/(f)$. When $R=k$ is a field, and $f\in V_n(k)$, and $C_{f,k}$ is a smooth plane curve in $\P_k^2$ of genus $(n-1)(n-2)/2$.

The {\it discriminant} $\Delta=\Delta_n\in\Z[V_n]$ is a $\GL_3(\Z)$-invariant on $V_n(\Z)$ defined by
\begin{equation}
\Delta(f):=c_n\mathrm R\Bigl(
\frac{\partial f}{\partial x}, \frac{\partial f}{\partial y},
\frac{\partial f}{\partial z}
\Bigr),
\end{equation}
where $\mathrm R$ above denotes the resultant of the three partial derivatives of $f$, and $c_n$ is a nonzero rational constant, unique up to sign, ensuring that the coefficients of $\Delta_n$ are relatively prime. We collect the following well known properties of the discriminant polynomial.
\begin{itemize}
\item [{\rm (1)}] The polynomial $\Delta_n$ is irreducible and homogeneous of degree $3(n-1)^2$.
\item [{\rm (2)}] For a field $k$ and $f\in V_n(k)$, we have that $C_{f,k}$ is a smooth plane curve if and only if $\Delta_n(f)\neq 0$.
\end{itemize}
For a comprehensive treatment of discriminant polynomials, see \cite[Chapter 13.1 D]{gelfand}.

\subsection{The Borel--Weil realization of $V_{2,2}$}

The Borel--Weil theorem realizes finite dimensional irreducible representations of reductive groups as spaces of sections on certain line bundles on flag varieties. In this subsection, we describe this realization for the representation $V_{2,2}$ of $\GL_3$.

For a field $k$, let $F_k= F(0,1,2,k)$ denote the complete flag variety over $k$: we have
\begin{equation*}
F(k)=\{(\ell,p):\ell\subset p\},
\end{equation*}
the set of pairs $(\ell,p)$, where $\ell\subset k^3$ is a line going through the origin and $p\subset k^3$ is a plane containing $\ell$. The action of $\GL_3(k)$ on $k^3$ yields an action of $\GL_3(k)$ on $F(k)$.

Clearly $F_k$ admits an embedding into the product of Grassmanians
\begin{equation*}
F_k\hookrightarrow G(1,3) \times G(2,3),
\end{equation*}
where $G(m,n)$ denotes the Grassmannian of $m$-dimensional subspaces of $n$-space over $k$. Both these Grassmannians are isomorphic to $\P^2_k$, and so we obtain two projection maps
\begin{equation*}
p_1,p_2:F_k\to\P^2_k.
\end{equation*}
Thees projection maps send the pair $(\ell,p)$ to $\ell$ and to $p$, respectively. (Here we are using the fact that $\P^2_k$ is isomorphic to its dual.)

Consider the line bundle $\L$ of the flag variety $F_k$, defined by
\begin{equation*}
\L:=(p_1)^*(\O(2))\otimes(p_2)^*(\O(2)).
\end{equation*}
The Borel--Weil theorem realizes the representation $V_{2,2}(k)$ of $\GL_3(k)$ as the space of global sections of $\L$ as follows.

We will parametrize lines in $k^3$ going through the origin via points $[x_1:x_2:x_3]\in\P^2(k)$. We will parametrize planes in $k^3$ going through the origin via points in $\P^2(k)$, where $[z_1:z_2:z_3]\in\P^2(k)$ corresponds to the plane given by the equation $\sum z_ix_i=0$. Therefore, the flag variety $F_k$ is given by the hypersurface in $\P^2_k\times\P^2_k$ cut out by the equation $\sum x_iz_i=0$. It is easy to check that the action of $\GL_3(k)$ on $F(k)$ descends from the following action of $\GL_3(k)$ on $\P^2_k\times\P^2_k$:
\begin{equation*}
\gamma\cdot ([x_1:x_2:x_3],[z_1:z_2:z_3])=( [x_1:x_2:x_3]\gamma,[z_1:z_2:z_3]\delta(\gamma)),
\end{equation*}
where $\delta(\gamma)=\det(\gamma)(\gamma^{-1})^t$ denotes the cofactor matrix of $\gamma$.
In this setup, $H^0(F_k, \L)$ is a quotient of the tensor product of the degree-$2$ parts of the polynomial rings $k[x_1, x_2, x_3]$ and $k[z_1, z_2, z_3]$. Specifically, we have: 
\begin{equation*} 
H^0(F_k, \L) = \frac{k[x_1, x_2, x_3]_2 \otimes k[z_1, z_2, z_3]_2}{I}, \end{equation*} where $I$ is given by
\begin{equation*} 
I:=\Bigl(\sum_{i=1}^3 x_i z_i\Bigr)\cap k[x_1, x_2, x_3]_2 \otimes k[z_1, z_2, z_3]_2.
\end{equation*}
The group $\mathrm{GL}_3(k)$ acts naturally on $H^0(F_k, \L)$. For $\gamma \in \mathrm{GL}_3(k)$ and a bihomogeneous $(2,2)$ polynomial $f(x_i,z_i)$, the action is given by:
\begin{equation*} \gamma \cdot f\big((x_1, x_2, x_3), (z_1, z_2, z_3)\big) = f\big((x_1, x_2, x_3)\gamma, (z_1, z_2, z_3)\delta(\gamma)\big). \end{equation*}
To see that this is an action on $H^0(F_k,\L)$, it is only necessary to check that the ideal generated by $x_1z_1+x_2z_2+x_3z_3$ is fixed by this action. This follows by noting that we have
\begin{equation*}
\sum_{i=1}^3x_iz_i = (x_1,x_2,x_3)(z_1,z_2,z_3)^t=
\frac{1}{\det(\gamma)}(x_1,x_2,x_3)\gamma((z_1,z_2,z_3)\delta(\gamma))^t,
\end{equation*}
for every $\gamma\in\GL_3(k)$. The content of the Borel-Weil theorem is that under this action, we have an isomorphism of the $\GL_3(k)$-modules $H^0(F_k,\L)$ and $V_{2,2}(k)$.

\subsection{K3 surfaces corresponding to elements in $V_{2,2}(k)$, and their branch curves}

Let $k$ be a field with characteristic not $2$. Given $v\in V_{2,2}(k)$, we let $S_v$ denote the surface in $F_k$ cut out by the corresponding section of $\L$. It is well known that, generically, $S_v$ is a K3 surface. Furthermore, this construction of $S_v$ from $v$ respects the two actions of $\GL_3(k)$ on $F(k)$ and $V_{2,2}(k)$. That is, for $\gamma\in \GL_3(k)$, we have $S_{\gamma\cdot v}=\gamma(S_v)$.
Let $v\in V_{2,2}(k)$ be an element which corresponds to the K3 surface $S_v$. Denote the branch curves of the two maps $p_1:S_v\to\P^2_k$ and $p_2:S_v\to \P^2_k$ by $C_{1,v}$ and $C_{2,v}$. When both $p_1$ and $p_2$ are covering maps, 
$C_{1,v}$ and $C_{2,v}$ are degree-$6$ curves in $\P^2_k$. We have the following lemma (see, for example \cite[Example 1.3, iv]{K3Global}).

\begin{lemma}
Let notation be as above. Assume that $p_1$ $($resp.\ $p_2)$ are covering maps from $S_v$ to $\P^2_k$. Then $S_v$ is smooth if and only if $C_{1,v}$ $($resp.\ $C_{2,v})$ is smooth.
\end{lemma}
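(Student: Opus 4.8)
The plan is to reduce the statement to the standard fact that a finite double cover of a smooth variety, branched along a divisor $B$, is smooth if and only if $B$ is smooth, applied to the degree-$2$ map $p_1\colon S_v\to\P^2_k$. By the symmetry interchanging the roles of the $x$- and $z$-coordinates (and $\gamma$ with $\delta(\gamma)$), it suffices to treat $p_1$; the argument for $p_2$ is identical. Since smoothness is a geometric and local condition, I work over $\bar k$ and check the Jacobian criterion in affine charts.

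First I would exhibit $p_1|_{S_v}$ as a double cover. The map $p_1\colon F_k\to\P^2_k$ is a $\P^1$-bundle: its fiber over $[x]=[x_1:x_2:x_3]$ is the line $\{[z]:\sum x_iz_i=0\}$ in the dual $\P^2$. Restricting $f$ to this fiber gives a binary quadratic form $Q_x$ in the two parameters of the line, and $S_v$ meets the fiber in $\{Q_x=0\}$, a length-$2$ subscheme. The covering-map hypothesis guarantees that $Q_x$ never vanishes identically, so $p_1|_{S_v}$ is finite flat of degree $2$, and the branch curve $C_{1,v}$ is exactly the locus where $Q_x$ acquires a double root, i.e. the vanishing of the discriminant $\mathrm{disc}(Q_x)$, which is a section of a line bundle on $\P^2_k$.

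Next I would localize. In an affine chart $x_3=1$ with coordinates $x_1,x_2$, I parametrize the fiber of $p_1$ by $[z_1:z_2]$ (with $z_3=-x_1z_1-x_2z_2$), so that $Q_x=A(x)z_1^2+B(x)z_1z_2+C(x)z_2^2$ and $C_{1,v}=\{B^2-4AC=0\}$. In the chart $z_2=1$ of the fiber, $S_v$ is the hypersurface $\{g=0\}$ with $g=Az_1^2+Bz_1+C$. Away from $C_{1,v}$ one has $\partial g/\partial z_1=2Az_1+B\neq0$ on $S_v$ (a zero would force a double root), so $S_v$ is smooth there; hence every singular point of $S_v$ lies over $C_{1,v}$, at the double root $z_1=-B/(2A)$. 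The key computation is to evaluate the $x$-derivatives of $g$ at such a point and compare them with those of $\mathrm{disc}=B^2-4AC$. Using $\mathrm{disc}=0$ there, a direct calculation gives $\partial(\mathrm{disc})/\partial x_i=-4A\,\partial g/\partial x_i$. Since $A\neq0$ in this chart and $\mathrm{char}\,k\neq2$, the point is singular on $S_v$ precisely when $\partial(\mathrm{disc})/\partial x_1=\partial(\mathrm{disc})/\partial x_2=0$, i.e. precisely when the image point of $C_{1,v}$ is singular. Combined with the previous paragraph, this shows $S_v$ is singular if and only if $C_{1,v}$ is, which is the claim. Equivalently and more conceptually, completing the square (valid since $\mathrm{char}\,k\neq2$) puts $S_v$ in the standard form $w^2=\mathrm{disc}(Q_x)$ inside the total space of a line bundle over $\P^2_k$, reducing the lemma to the classical smoothness criterion for such double covers.

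The main obstacle is the global bookkeeping that makes the double-cover description and the completing-the-square step rigorous: one must pin down the line bundle of which $\mathrm{disc}(Q_x)$ is a section, and handle the chart where the leading coefficient $A$ vanishes — there the double root sits at $[z_1:z_2]=[1:0]$, so one instead works in the chart $z_1=1$, where the symmetric computation applies. Once these points are settled, the proof is exactly the Jacobian comparison above, with $\mathrm{char}\,k\neq2$ ensuring that $-4A$ is a unit at each ramification point.
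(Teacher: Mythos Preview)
Your argument is correct: the Jacobian comparison you carry out (equivalently, the completing-the-square reduction to $w^2=\mathrm{disc}(Q_x)$) is exactly the standard verification that a double cover of a smooth surface is smooth if and only if its branch divisor is. The key identity $\partial(\mathrm{disc})/\partial x_i=-4A\,\partial g/\partial x_i$ at points with $\mathrm{disc}=0$ checks out, and your chart-switching remark correctly handles the case $A=0$ (noting that $A=B=C=0$ is excluded by the covering hypothesis).

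The paper, however, does not give a proof at all: it states the lemma as a known fact and refers the reader to Huybrechts' lectures on K3 surfaces (the citation \cite[Example 1.3, iv]{K3Global}), where this appears as a standard example of the smoothness criterion for cyclic covers. So your write-up supplies precisely the elementary local computation that the paper outsources to the literature; there is no alternative approach in the paper to compare against.
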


We now construct two relative covariants for the representation $V_{2,2}$ of $\GL_3$. These covariants are defined over $\Z$, and so we work in a general setting. Given a degree-$(2,2)$ bihomogeneous polynomial $f(x_1,x_2,x_3,z_1,z_2,z_3)$, with coefficients in a ring $R$, we consider it as a ternary quadratic form $Q_z(x_1,x_2,x_3)$ in the $x_i$, whose coefficients are degree-$2$ homogeneous polynomials in the $z_i$. We abuse notation, and use $Q_z$ to denote the $3\times 3$ symmetric Gram matrix associated to this quadratic form. Then we have
\begin{equation*}
f(x_1,x_2,x_3,z_1,z_2,z_3)=(x_1,x_2,x_3)Q_z(x_1,x_2,x_3)^t.
\end{equation*}
Similarly, we may consider $f$ as a ternary quadratic form $Q_x(z_1,z_2,z_3)$ in the $z_i$ whose coefficients are degree-$2$ homogeneous polynomials in the $x_i$. Once again, we use $Q_x$ to also denote the Gram matrix associated to this quadratic form, and we have
\begin{equation*}
f(x_1,x_2,x_3,z_1,z_2,z_3)=(z_1,z_2,z_3)Q_x(z_1,z_2,z_3)^t.
\end{equation*}
Then our two covariants are given by
\begin{equation*}
I_x(f):=(x_1,x_2,x_3)\Adj(Q_x)(x_1,x_2,x_3)^t,\quad\quad
I_z(f):=(z_1,z_2,z_3)\Adj(Q_z)(z_1,z_2,z_3)^t,
\end{equation*}
where $\Adj(Q)$ denotes the adjoint matrix of $Q$.
We next prove that these quantities are well defined, and in fact covariants.
\begin{proposition}\label{prop_const_covariants}
Let $R$ be any ring. The maps 
\begin{equation*}
I_x:V_{2,2}(R)\to V_6(R)\quad\mbox{ and }\quad I_z:V_{2,2}(R)\to V_6(R)
\end{equation*}
are well defined. Moreover, we have 
\begin{equation*}
I_x(\gamma f)=\det(\gamma)^2\gamma(I_x(f)),\quad\quad 
I_z(\gamma f)=\delta(\gamma)(I_z(f)),
\end{equation*}
for all $\gamma\in \GL_3(R)$.
\end{proposition}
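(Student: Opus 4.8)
The plan is to reduce both claims to elementary linear algebra valid over an arbitrary commutative ring, the two ingredients being the \emph{bordered-determinant} presentation of the quantity $(x_1,x_2,x_3)\Adj(Q)(x_1,x_2,x_3)^t$ and the antimultiplicativity of the adjugate. The starting point is the identity
\begin{equation*}
(x_1,x_2,x_3)\,\Adj(Q)\,(x_1,x_2,x_3)^t = -\det\begin{pmatrix} Q & x \\ x^t & 0\end{pmatrix},
\end{equation*}
valid for every symmetric $3\times 3$ matrix $Q$ and column vector $x=(x_1,x_2,x_3)^t$; since both sides are polynomials in the entries of $Q$ and $x$, it suffices to verify it over $\Z[q_{ij},x_i]$. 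Consequently $I_x(f) = -\det\left(\begin{smallmatrix} Q_x & x \\ x^t & 0\end{smallmatrix}\right)$, and likewise for $I_z$.

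Using this, I would first establish well-definedness. Since $V_{2,2}(R)$ is the quotient of the bidegree-$(2,2)$ polynomials by multiples of $\sum_i x_iz_i$, the form $f$ is determined only up to adding $\bigl(\sum_i x_iz_i\bigr)g$ with $g$ of bidegree $(1,1)$. A direct expansion shows that such a change alters the Gram matrix $Q_x$ (of $f$ viewed as a quadratic form in the $z_i$) by a symmetric term of the shape $\tfrac12\bigl(xb^t+bx^t\bigr)$, where $b=b(x)$ is a vector of linear forms in the $x_i$. Substituting $Q_x+\tfrac12(xb^t+bx^t)$ into the bordered determinant and performing the column operations that subtract $\tfrac12 b_j$ times the last column from the $j$-th column, followed by the matching row operations, removes the perturbation and leaves the determinant unchanged. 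Hence $I_x(f)$ is independent of the representative; the argument for $I_z$ is identical with the roles of the $x_i$ and $z_i$ exchanged.

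For the covariance I would track the Gram matrices under the action. Writing $f=(z_1,z_2,z_3)Q_x(z_1,z_2,z_3)^t$, the definition $\gamma\cdot f(x,z)=f(x\gamma,z\,\delta(\gamma))$ shows that the Gram matrix of $\gamma\cdot f$, as a quadratic form in the $z_i$, is $\delta(\gamma)\,Q_x(x\gamma)\,\delta(\gamma)^t$. Applying $\Adj(ABC)=\Adj(C)\Adj(B)\Adj(A)$ together with the computation $\Adj(\delta(\gamma))=\det(\gamma)\,\gamma^t$ (which follows from $\Adj(\Adj M)=\det(M)M$ for $3\times3$ matrices applied to $\delta(\gamma)=\Adj(\gamma)^t$) gives $\Adj$ of the new Gram matrix equal to $\det(\gamma)^2\,\gamma\,\Adj(Q_x(x\gamma))\,\gamma^t$. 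Contracting against $(x_1,x_2,x_3)$ on both sides collapses the $\gamma$'s into the argument and yields $I_x(\gamma f)=\det(\gamma)^2\,I_x(f)(x\gamma)=\det(\gamma)^2\,\gamma(I_x(f))$. The computation for $I_z$ is the same, except that now the conjugating matrix is $\gamma$ itself and $\Adj(\gamma)=\delta(\gamma)^t$ brings in no extra determinant, producing $I_z(\gamma f)=\delta(\gamma)(I_z(f))$.

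The computations are all routine; the one genuinely delicate point is the well-definedness, i.e.\ checking that the covariants descend to the quotient defining $V_{2,2}(R)$. The bordered-determinant identity is exactly what makes this transparent, converting the invariance of $(x_1,x_2,x_3)\Adj(Q_x)(x_1,x_2,x_3)^t$ under the rank-$\le 2$ symmetric perturbation into elementary row and column operations. I would take care to phrase every step as a polynomial identity in the entries, so that the proposition holds over an arbitrary ring $R$ as stated.
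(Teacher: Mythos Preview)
Your argument is correct, and the covariance half is essentially the same computation as the paper's: track how the Gram matrix transforms, apply the antimultiplicativity of the adjugate, and use $\Adj(\delta(\gamma))=\det(\gamma)\gamma^t$ (which the paper leaves implicit but you spell out).

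Where you genuinely diverge is in the well-definedness. The paper proceeds by brute force: it writes out the Gram matrix of the perturbation $L\cdot\sum x_iz_i$ explicitly, expands $(x_1,x_2,x_3)\Adj(Q_x+\text{perturbation})(x_1,x_2,x_3)^t$, and checks by hand (appealing to symmetry) that the coefficients of $T_1$, $T_1^2$, and $T_1T_2$ all vanish. Your bordered-determinant identity $x^t\Adj(Q)x = -\det\left(\begin{smallmatrix}Q & x\\ x^t & 0\end{smallmatrix}\right)$ turns the same verification into two elementary row and column operations, with no case analysis. This is cleaner and more conceptual: it makes transparent \emph{why} the rank-at-most-two perturbation $\tfrac12(xb^t+bx^t)$ is invisible to the covariant, rather than verifying it one monomial at a time. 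The only caution is that your operations involve $\tfrac12 b_j$, so over a ring where $2$ is not a unit the manipulation is not literally valid; but you already note the fix, namely that both sides of $I_x(f+L\sum x_iz_i)=I_x(f)$ are polynomials with integer coefficients in the data, so verifying the identity over $\Q$ (where the $\tfrac12$ is harmless) suffices. The paper sidesteps this by taking $L=2(T_1z_1+T_2z_2+T_3z_3)$, which keeps the Gram-matrix perturbation integral from the start, at the cost of a slightly odd-looking normalization.
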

\begin{proof}
To prove the first claim, we must check that $I_x(f)$ only depends on the class of $f$ in $V_{2,2}(R)$. That is, $I_x(f)=I_x(f+g)$ for any $g$ of the form $L(\sum_{i=1}^3x_iz_i)$ with $L$ a degree-$(1,1)$ bihomogeneous polynomial. (Once this is proven for $I_x$, the analogous claim will follow for $I_z$ by symmetry.) We prove this claim via a direct computation. Denote the $ij$th coefficients of the ternary quadratic form in the $z_i$ corresponding to $f$ by $M_{ij}$. (Note that the $M_{ij}$ are ternary quadratic forms in $x_1$, $x_2$, and $x_3$.) Let $L=2(T_1z_1+T_2z_2+T_3z_3)$, where the $T_i$ are linear forms in the $x_i$. Then the Gram matrix of the quadratic form (in the $z_i$) corresponding to $g$ is
\begin{equation*}
\begin{pmatrix}
T_1 \\
T_2 \\
T_3
\end{pmatrix}
(x_1\,x_2\,x_3)
+
\begin{pmatrix}
x_1 \\
x_2 \\
x_3
\end{pmatrix}
(T_1\,T_2\,T_3)
=
\begin{pmatrix}
2x_1 T_1 & x_1 T_2 + x_2 T_1 & x_1 T_3 + x_3 T_1 \\
x_1 T_2 + x_2 T_1 & 2x_2 T_2 & x_2 T_3 + x_3 T_2 \\
x_1 T_3 + x_3 T_1 & x_2 T_3 + x_3 T_2 & 2x_3 T_3
\end{pmatrix}.
\end{equation*}
It is necessary for us to check that
\begin{equation*}
(x_1\,x_2\,x_3)
\Adj\begin{pmatrix}
2x_1 T_1 + M_{11} & x_1 T_2 + x_2 T_1 + M_{12} & x_1 T_3 + x_3 T_1 + M_{13} \\
x_1 T_2 + x_2 T_1 + M_{12} & 2x_2 T_2 + M_{22} & x_2 T_3 + x_3 T_2 + M_{23} \\
x_1 T_3 + x_3 T_1 + M_{13} & x_2 T_3 + x_3 T_2 + M_{23} & 2x_3 T_3 + M_{33}
\end{pmatrix}
\begin{pmatrix}
x_1 \\
x_2 \\
x_3
\end{pmatrix}
\end{equation*}
does not depend on $T_1$, $T_2$, and $T_3$.
It is clear that the term $T_1^2$. By symmetry, it is then enough to check that the coefficients of $T_1$ and $T_1T_2$ are $0$. An easy (though somewhat tedious) calculation, thus yields the first claim.

To demonstrate the covariance property, consider the action of $\gamma$ on the form $f$, expressed as:
\begin{equation*}
    \gamma \cdot f = (z_1, z_2, z_3) \cdot (\delta(\gamma) Q_x^\gamma \delta(\gamma)^t) \cdot (z_1, z_2, z_3)^t,
\end{equation*}
where $Q_x^\gamma$ denotes the matrix obtained by applying $\gamma$ to the entries of $Q_x$, the matrix associated with $f$. Now, examine $I_x(\gamma \cdot f)$:

\begin{align*}
    I_x(\gamma \cdot f) &= (x_1, x_2, x_3) \cdot \operatorname{Adj}(Q_x(\gamma \cdot f)) \cdot (x_1, x_2, x_3)^t  \\
    &= (x_1, x_2, x_3) \cdot \operatorname{Adj}(\delta(\gamma) Q^{\gamma}_x \delta(\gamma)^t) \cdot (x_1, x_2, x_3)^t \\
    &= (x_1, x_2, x_3) \cdot \operatorname{Adj}(\delta(\gamma)^t) \cdot \operatorname{Adj}(Q^{\gamma}_x) \cdot \operatorname{Adj}(\delta(\gamma)) \cdot (x_1, x_2, x_3)^t \\
    &= \det(\gamma)^2 \gamma(I_x(f)).
\end{align*}
Similarly, for $\gamma \cdot f$, we obtain  
\begin{equation*}
    \gamma \cdot f = (x_1, x_2, x_3) \cdot (\gamma Q_z^{\delta(\gamma)} \gamma^t) \cdot (x_1, x_2, x_3)^t,
\end{equation*}
where $Q_z^{\delta(\gamma)}$ is obtained by applying $\delta(\gamma)$ to the entries of the matrix $Q_z$ associated with $f$. Now, consider $I_z(\gamma \cdot f)$:
\begin{align*}
    I_z(\gamma \cdot f) &= (z_1, z_2, z_3) \cdot \operatorname{Adj}(Q_z(\gamma \cdot f)) \cdot (z_1, z_2, z_3)^t \\
    &= (z_1, z_2, z_3) \cdot \operatorname{Adj}(\gamma Q_z^{\delta(\gamma)} \gamma^t) \cdot (z_1, z_2, z_3)^t \\
    &= (z_1, z_2, z_3) \cdot \operatorname{Adj}(\gamma^t) \cdot \operatorname{Adj}(Q^{\delta(\gamma)}_z) \cdot \operatorname{Adj}(\gamma) \cdot (z_1, z_2, z_3)^t \\
    &= \delta(\gamma)(I_z(f)).
\end{align*}
This completes the proof.
\end{proof}

The significance of these two covariants is in the following result:

\begin{proposition}\label{prop_branch_covariants}
Let $k$ be a field of characteristic not $2$, and let $f\in V_{2,2}(k)$ correspond to a smooth K3 surface such that both projection maps $S_f\to\P^2$ are covering maps. Then the two branch curves $C_{1,f}$ and $C_{2,f}$ of these covering maps are cut out by the covariants $I_z(f)$ and $I_x(f)$, respectively.
\end{proposition}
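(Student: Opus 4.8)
The plan is to compute each branch curve fiber by fiber and to recognize the branch condition as the classical dual-conic (adjugate) condition for a line to be tangent to a conic. By the $x\leftrightarrow z$ symmetry of the whole construction---which simultaneously interchanges $p_1$ and $p_2$, the two Gram matrices $Q_x$ and $Q_z$, and the two covariants---it is enough to analyze one projection, say $p_1\colon S_f\to\P^2$, and I would work over $\overline{k}$. The target of $p_1$ is the $\P^2$ of lines $\ell$, with coordinates $[x_1:x_2:x_3]$. Fixing such a point, its fiber consists of the planes $p\supset\ell$ lying on $S_f$; recording $p$ by $[z_1:z_2:z_3]$, these are the points of $\P^2$ satisfying both the line equation $\sum_i x_iz_i=0$ and the conic equation $f(x,z)=(z)\,Q_x\,(z)^t=0$, where $Q_x$ is the Gram matrix evaluated at the fixed $x$. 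Thus each fiber is the intersection of a line with a conic in $\P^2$, a length-$2$ scheme, and $p_1$ ramifies over $[x]$ exactly when these two points collide, i.e.\ when the line $\sum_i x_iz_i=0$ is tangent to the conic $Q_x=0$.

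The heart of the argument is then the dual-conic identity, valid in characteristic different from $2$ (which is where the hypothesis on $k$ is used, both for the symmetric-form formalism and for the branched-cover picture): restricting $Q$ to a line with coefficient vector $a$ gives a binary quadratic form whose discriminant vanishes---so that the two intersection points collide---precisely when $a^{t}\Adj(Q)\,a=0$. One sees this directly from $\Adj(Q)\,Q\,\Adj(Q)=\det(Q)\Adj(Q)$, which shows that the pole $\Adj(Q)\,a$ of the line lies on the conic exactly under this condition. Taking $a=(x_1,x_2,x_3)$ and $Q=Q_x$, the branch curve of $p_1$ is the zero locus of $(x)\,\Adj(Q_x)\,(x)^{t}$; running the same computation with the roles of $x$ and $z$ reversed, the branch curve of $p_2$ is the zero locus of $(z)\,\Adj(Q_z)\,(z)^{t}$. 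These are exactly the two covariants, giving the desired identification of each branch curve with the adjugate covariant built from the corresponding Gram matrix. This dovetails with Proposition \ref{prop_const_covariants}: changing $f$ by a multiple of $\sum_i x_iz_i$ does not alter the restriction of the conic to the line $\sum_i x_iz_i=0$, hence does not alter the intersection scheme or the tangency locus, consistent with the covariants being well defined on $V_{2,2}$.

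The step I expect to require the most care is upgrading this pointwise (set-theoretic) description to an equality of plane curves with the correct degree, together with controlling the points where $Q_x$ is singular and the adjugate drops rank, where the tangency condition must be read off from $\Adj(Q_x)$ rather than from $Q_x^{-1}$. I would close this gap by a degree count. The covariant $(x)\,\Adj(Q_x)\,(x)^{t}$ is a form of degree $6$, and the branch curve of the double plane $p_1\colon S_f\to\P^2$ is a sextic (this is the classical K3 double-plane picture: a double cover of $\P^2$ with trivial canonical bundle is branched along a sextic), and by the Lemma preceding this proposition that sextic is reduced, indeed smooth, exactly when $S_f$ is smooth. Since a reduced sextic branch curve is contained in the degree-$6$ zero locus of the covariant, and the covariant is not identically zero for our $f$, the two curves must agree as divisors; the degenerate locus where $\Adj(Q_x)$ drops rank is then automatically absorbed into this equality. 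The identical argument for $p_2$ completes the proof.
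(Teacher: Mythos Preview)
Your argument is essentially the same as the paper's: fix a point in the target of $p_1$, identify the fiber as the intersection of the line $\sum x_iz_i=0$ with the conic $(z)Q_x(z)^t=0$, and recognize the branch condition as the classical dual-conic/adjugate condition, then invoke symmetry for $p_2$. You add a degree count and a remark on degenerate conics to pass from set-theoretic to scheme-theoretic equality; the paper's proof omits this step, so your version is if anything slightly more careful.
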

\begin{proof}
Let $\overline{k}$ denote the algebraic closure of $k$. The surface $S_f(\overline{k})$ is cut out in $\P^2(\overline{k})\times \P^2(\overline{k})$ by the equations 
\begin{equation*}
f(x_1,x_2,x_3,z_1,z_2,z_3)=0,\quad x_1z_1+x_2z_2+x_3z_3=0.
\end{equation*}
 A point $P=[a_1:a_2:a_3]\in\P^2(\overline{k})$ lies on $C_{1,f}(\overline{k})$ if and only if the line $L_P$ cut out by $\sum a_iz_i=0$ is tangent to the conic cut out by $Q_{(a_1,a_2,a_3)}$, where we recall that we write $f(x_i,z_i)=(z_1,z_2,z_3)Q_x(z_1,z_2,z_3)^t$.
 By definition, a line is tangent to a conic if and only if the line lies on the dual conic. Therefore $L_P$ being tangent to the conic cut out by $Q_{(a_1,a_2,a_3)}$ is equivalent to the equation
 \begin{equation*}
(a_1,a_2,a_3)\Adj(Q_{(a_1,a_2,a_3)})(a_1,a_2,a_3)^t = I_z(a_1,a_2,a_3)=0
 \end{equation*}
 being satisfied,
 since the adjoint matrix of a symmetric matrix $Q$ cuts out the conic dual to the conic cut out by $Q$. This concludes the proof of the first part of the proposition. The proof of the second part is identical.
\end{proof}

\section{Finiteness results for ternary forms}

In this section, we prove Theorems \ref{thm1} and \ref{thm2}.

\subsection{Ternary cubic forms}

Let $K$ be a number field. When $n=2$, our main theorems follow from the theory of quadratic forms. In this subsection, we consider the space $V_3$ of ternary cubic forms. It is well known (see, for example, \cite{ternary}) that the ring of polynomial invariants for the action of $\SL_3(\Z)$ on $V(\Z)$ is freely generated by two invariants, usually denoted by $I$ and $J$. Here $I$ has degree $4$ and $J$ has degree $6$, and the discriminant $\Delta=\Delta_3$ is given by
\begin{equation*}
\Delta(f) := \Delta(I(f),J(f)):=\frac{1}{27}(4I(f)^2-J(f)^3).
\end{equation*}
Let $K$ be a number field, and let $S$ be a finite set of places of $K$ containing all the infinite places. Our main diophantine input for this subsection is Siegel's theorem on the finiteness of $S$-integral points of elliptic curves.
\begin{theorem}[\cite{Siegel}]\label{th:Siegel}
Let $E$ be an elliptic curve over a number field $K$. Let $S$ be a finite set of places of $K$, containing all the infinite places. Then the number of $S$-integral points on $E$ is finite.
\end{theorem}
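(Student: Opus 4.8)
This final statement is Siegel's theorem, which the paper imports as a black box; nonetheless, here is the proof I would give. The plan is to assemble it from three classical ingredients: the Mordell--Weil theorem, the theory of N\'eron--Tate canonical heights, and Roth's theorem on Diophantine approximation (in its form for number fields with several absolute values). Since $S$-integral points of $E/K$ inject into $S'$-integral points of $E/K'$ for any finite extension $K'/K$ and any $S'\supseteq S$, I may freely enlarge $K$ and $S$. Fix a Weierstrass model, so that the coordinate function $x$ has divisor of poles $2(O)$; ``$S$-integral'' then means that, for each place $v\notin S$, the point is $v$-adically bounded away from $O$.

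Suppose for contradiction that there were infinitely many distinct $S$-integral points $P_1,P_2,\dots$, necessarily with canonical height $\hat h(P_i)\to\infty$. The first move is to convert $S$-integrality into a statement about $v$-adic proximity. Writing the height as a sum of local contributions, the hypothesis that the $P_i$ are integral outside $S$ forces essentially all of $\hat h(P_i)$ to be carried by the places in $S$. By the pigeonhole principle (passing to a subsequence), there is a single place $v\in S$ and a single point $Q$ (a pole of $x$) such that $P_i$ approaches $Q$ rapidly at $v$; concretely, $-\log d_v(P_i,Q)\gg \hat h(P_i)$, where $d_v$ denotes a $v$-adic distance function on $E$.

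The heart of the argument is to show that no sequence of $K$-rational points can approach a fixed point this fast, i.e.\ that $\log d_v(P_i,Q)/\hat h(P_i)\to 0$, contradicting the previous paragraph. Here I would use descent. By Mordell--Weil, $E(K)$ is finitely generated, so for a fixed integer $m\geq 2$ the quotient $E(K)/mE(K)$ is finite; thus $P_i=[m]R_i+T$ with $T$ in a finite set of coset representatives and $R_i$ lying in a fixed finite extension $K'$ (generated by the finitely many $m$-division points). By the parallelogram law and $\hat h([m]R_i)=m^2\hat h(R_i)$ one gets $\hat h(R_i)\leq (2/m^2)\hat h(P_i)+O(1)$, so division by $m$ shrinks the height by a factor of about $m^2$. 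If $P_i$ is $v$-adically close to $Q$, then $R_i$ is $w$-adically close (for some $w\mid v$ in $K'$) to one of the finitely many solutions $Q'$ of $[m]Q'=Q-T$, which yields a good approximation $|x(R_i)-x(Q')|_w$ of the algebraic number $x(Q')$ by the coordinate of the rational point $R_i$. Comparing the quality of this approximation against $\hat h(R_i)$, and recalling that $\hat h(R_i)\approx \hat h(P_i)/m^2$, one finds that a lower bound of the form $-\log d_v(P_i,Q)\geq c\,\hat h(P_i)$ with $c>0$ would force an approximation exponent that exceeds $2$ once $m$ is taken large enough, violating Roth's theorem.

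The main obstacle is this last step: making the passage from the $v$-adic distance on $E$ to an honest Diophantine approximation of an algebraic number quantitative, and verifying that the descent genuinely boosts the approximation exponent past the critical value $2$ uniformly in $i$ after a fixed choice of $m$. This requires the comparison of the distance function $d_v$ with local height functions and careful bookkeeping of the constants under division by $m$; the rest of the argument is formal once Mordell--Weil, the height machinery, and Roth are in hand.
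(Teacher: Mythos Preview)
You correctly note that the paper does not prove this: Theorem~\ref{th:Siegel} is stated with a citation to Siegel and used purely as a diophantine black box, so there is no argument in the paper to compare against. Your sketch is the standard modern proof (essentially the treatment in Silverman's \emph{The Arithmetic of Elliptic Curves}, Ch.~IX): Mordell--Weil plus canonical heights reduces to a strong $v$-adic approximation of a fixed algebraic point, and the multiplication-by-$m$ descent amplifies the approximation exponent past Roth's threshold. The outline is sound.

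One small imprecision worth fixing: in the descent step you say the $R_i$ lie in a fixed finite extension $K'$. In fact, since $T$ is chosen as a coset representative for $E(K)/mE(K)$, the point $P_i-T$ lies in $mE(K)$ and hence $R_i\in E(K)$. What lives over the extension $K'=K(E[m])$ is the \emph{target} $Q'\in[m]^{-1}(Q-T)$ that $R_i$ approximates; Roth is then applied (over $K$, or over $K'$ after the enlargement you allow) to the approximation of the algebraic number $x(Q')$ by $x(R_i)$. With that correction the bookkeeping goes through as you describe.
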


We will combine this diophantine input with the following well known application of the construction of fundamental domains for the action of $\SL_3(\O_{K,S})$ on $\prod_{v\in S}\SL_3(K_v)$ (see works of Siegel \cite{Siegel} and Borel--Harish-Chandra's \cite{BHC}). 
\begin{theorem}\label{th:BHC}
Fix $n\geq 3$. Let $K$ and $S$ be as above, and let $x$ be an element of $V_n(\cO_{K,S})$ which is stable in the sense of GIT. Then there are finitely many $\SL_3(\cO_{K,S})$-orbits on $V_n(\cO_{K,S})\cap (\prod_{v\in S}\SL_3(K_v)\cdot x)$.
\end{theorem}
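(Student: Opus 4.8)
The statement to prove is Theorem~\ref{th:BHC}: for fixed $n\geq 3$, a number field $K$ with finite place-set $S\supseteq S_\infty$, and a GIT-stable $x\in V_n(\cO_{K,S})$, the intersection $V_n(\cO_{K,S})\cap\bigl(\prod_{v\in S}\SL_3(K_v)\cdot x\bigr)$ decomposes into finitely many $\SL_3(\cO_{K,S})$-orbits.

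\medskip
\noindent\textbf{Proof proposal.}
The plan is to deduce this from the general finiteness theorem of Borel--Harish-Chandra \cite{BHC} on orbits of arithmetic groups acting on affine varieties, after checking that stability puts us in a situation where their hypotheses apply. First I would pass from the $S$-integral picture to the adelic/archimedean-plus-$S$ picture: set $G=\SL_3$, viewed as a reductive group over $K$, and let $\Gamma=\SL_3(\cO_{K,S})$ be the corresponding $S$-arithmetic group, acting on the lattice $V_n(\cO_{K,S})$ inside the representation $V_n$ over $K$. The object $\prod_{v\in S}\SL_3(K_v)\cdot x$ is the orbit of $x$ under the full group of $S$-points $G_S=\prod_{v\in S}G(K_v)$, so the set we must finitely partition is the set of $\cO_{K,S}$-points lying in a single $G_S$-orbit.

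\medskip
The key step is to invoke the result quoted in Remark~(2) of the introduction, namely that Borel--Harish-Chandra yields Step~4 whenever (i) $G$ is reductive, (ii) the relevant elements are GIT-stable, and (iii) the stabilizer in $G(\C)$ of a generic element is finite. Here (i) holds since $G=\SL_3$ is semisimple, and (ii) is the hypothesis on $x$. For (iii) I would argue that stability of $x$ already forces the stabilizer $\Stab_{G}(x)$ to be finite: by the Hilbert--Mumford criterion, stability means the orbit $G\cdot x$ is closed and the stabilizer is finite, which is exactly the definition of a \emph{stable} point in GIT. Concretely, an element $f\in V_n(K)$ with $\Delta_n(f)\neq 0$ cuts out a smooth plane curve $C_f\subset\P^2$ of genus $(n-1)(n-2)/2\geq 1$ for $n\geq 3$, and the $\SL_3$-stabilizer of $f$ embeds into the finite automorphism group of the pair $(\P^2,C_f)$; hence it is finite, giving (iii). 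With all three conditions in place, the Borel--Harish-Chandra finiteness theorem asserts precisely that the $\Gamma$-orbits on the intersection of the integral lattice with the orbit of $x$ under the group of $S$-points form a finite set.

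\medskip
More explicitly, the mechanism is the construction of a Siegel fundamental domain $\mathfrak{S}$ for the action of $\Gamma$ on $G_S=\prod_{v\in S}\SL_3(K_v)$, together with the observation that the map $g\mapsto g\cdot x$ has image the $G_S$-orbit of $x$, and that the stabilizer $\Stab_{G_S}(x)$ is a finite extension of a lattice. Because $x$ is stable, the orbit map is proper and the stabilizer is $S$-arithmetically small; the integral points $V_n(\cO_{K,S})$ form a discrete subset of $V_n(K_S)$, and a discrete set meeting a single orbit inside a Siegel domain of finite covolume can only hit finitely many $\Gamma$-translates. I would make this precise by pulling the integrality condition back through the orbit map and using that a Siegel set meets $\Gamma$ in a set that is finite modulo the stabilizer, exactly as in \cite{BHC} and in Siegel's original treatment \cite{Siegel}.

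\medskip
\noindent\textbf{Main obstacle.}
The substantive point, and the one I expect to require the most care, is verifying condition (iii)---the finiteness of the generic stabilizer---and checking that stability in the sense used here matches the properness/closed-orbit input that Borel--Harish-Chandra actually require. Once stability is translated into ``closed orbit with finite stabilizer,'' the finiteness of $\Gamma$-orbits is a direct citation of \cite{BHC}; the real work is the dictionary between the GIT notion of stability for $\SL_3$ acting on $V_n$ and the reduction-theoretic hypotheses, which is why I would spend the bulk of the argument confirming that $\Delta_n(x)\neq 0$ (equivalently smoothness of $C_x$) suffices to make $x$ stable with finite stabilizer for every $n\geq 3$.
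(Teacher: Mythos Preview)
Your approach is essentially the same as the paper's: the paper does not give a self-contained proof of this theorem but simply records it as a well-known consequence of the reduction theory of Siegel and Borel--Harish-Chandra, citing \cite{Siegel} and \cite{BHC}. Your sketch---reductive $G$, stable $x$ hence closed orbit with finite stabilizer, then invoke the finiteness of integral points in a single $G_S$-orbit via a Siegel domain---is exactly the content of that citation, so your proposal is correct and aligned with the paper.

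One small comment: your ``main obstacle'' paragraph is misplaced. The theorem \emph{assumes} $x$ is GIT-stable, so you do not need to verify that $\Delta_n(x)\neq 0$ implies stability; closed orbit and finite stabilizer come for free from the hypothesis. The translation you worry about (stability $\Leftrightarrow$ closed orbit with finite stabilizer) is just the definition, and once you have that, the citation to \cite{BHC} is immediate. The place where one actually checks that nonzero discriminant gives a stable point is elsewhere in the paper, when Corollary~\ref{cor:BHC} is applied.
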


The above result has the following immediate corollary.

\begin{corollary}\label{cor:BHC}
Let $T\subset V_n(\cO_{K,S})$ be a subset of stable elements such that for $f_1, f_2\in T$, we have $P(f_1)=P(f_2)$ for {\it every} polynomial $\SL_3(\Z)$-invariant $P\in \Z[V_n]$. Then $T$ breaks up into finitely many $\SL_3(\O_{K,S})$-equivalence classes.
\end{corollary}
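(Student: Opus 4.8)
The plan is to deduce the statement from finitely many applications of Theorem~\ref{th:BHC}. The hypothesis says that every $f\in T$ takes the same value under every $P\in\Z[V_n]^{\SL_3(\Z)}$; equivalently, all of $T$ lies in a single fibre of the invariant map $\pi\colon V_n\to V_n/\!/\SL_3$, say $T\subseteq\pi^{-1}(c)$ for one point $c$ of the GIT quotient (defined over $\cO_{K,S}$). Thus it suffices to show that the stable points of $\pi^{-1}(c)(\cO_{K,S})$ fall into finitely many $\SL_3(\cO_{K,S})$-orbits.

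First I would pass to $\overline{K}$. Since $K$ has characteristic zero and $\SL_3(\Z)$ is Zariski-dense in $\SL_3$, the formation of invariants commutes with base change, so the integer invariants generate $\overline{K}[V_n]^{\SL_3}$; hence equal invariants force the same image in $V_n/\!/\SL_3$ over $\overline{K}$. By the basic property of the GIT quotient, each fibre of $\pi$ over $\overline{K}$ contains a unique closed orbit, while stable points have closed orbits and finite stabilizers. Consequently any two stable points of $\pi^{-1}(c)(\overline{K})$ lie in one common orbit $O=\SL_3\cdot x\cong \SL_3/\Stab_x$, with $\Stab_x$ finite.

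Next I would count the rational orbits inside $O$. The set of $\SL_3(K)$-orbits on $O(K)$ injects into $\ker\bigl(H^1(K,\Stab_x)\to H^1(K,\SL_3)\bigr)$; as $\SL_3$ is special we have $H^1(K,\SL_3)=1$, so this set injects into $H^1(K,\Stab_x)$, which is finite because $\Stab_x$ is a finite group scheme over a number field (Hermite--Minkowski). Hence the stable points of $\pi^{-1}(c)(K)$, and in particular all of $T$, break into finitely many $\SL_3(K)$-orbits, with representatives $x_1,\dots,x_m\in T$.

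Finally, setting $T_j:=T\cap(\SL_3(K)\cdot x_j)$ gives a finite partition $T=T_1\cup\cdots\cup T_m$. Since $\SL_3(K)$ embeds diagonally in $\prod_{v\in S}\SL_3(K_v)$, each $T_j$ lies in $V_n(\cO_{K,S})\cap\bigl(\prod_{v\in S}\SL_3(K_v)\cdot x_j\bigr)$, and $x_j\in T$ is stable, so Theorem~\ref{th:BHC} shows that $T_j$ is a finite union of $\SL_3(\cO_{K,S})$-orbits. Summing over $j$ gives the corollary. The only content beyond Theorem~\ref{th:BHC} lies in the middle two paragraphs: the main obstacle is establishing that fixing all invariants of a stable point determines its $\SL_3(\overline{K})$-orbit and that such a geometric orbit meets $V_n(K)$ in only finitely many $\SL_3(K)$-orbits, which is exactly where stability (finiteness of $\Stab_x$) and the number-field hypothesis are used.
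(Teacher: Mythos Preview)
Your overall architecture matches the paper's exactly: show that $T$ lies in a single $\SL_3(\overline{k})$-orbit (using stability plus equality of all invariants), then use Galois cohomology with values in the finite stabilizer to reduce to finitely many orbits over the base field, then invoke Theorem~\ref{th:BHC}. However, the middle step as you wrote it contains a genuine error.

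You assert that $H^1(K,\Stab_x)$ is finite ``because $\Stab_x$ is a finite group scheme over a number field (Hermite--Minkowski).'' This is false. Already for $\Stab_x=\mu_2$ one has $H^1(K,\mu_2)\cong K^\times/(K^\times)^2$, which is infinite for every number field $K$. Hermite--Minkowski bounds the number of extensions of $K$ of given degree \emph{and bounded discriminant}; without a ramification constraint there are infinitely many, and nothing in your setup imposes one. So your partition $T=T_1\cup\cdots\cup T_m$ into finitely many $\SL_3(K)$-orbits is not justified.

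The paper sidesteps this by working \emph{locally} rather than globally: it fixes a place $v\in S$ and shows that $T$ meets only finitely many $\SL_3(K_v)$-orbits, using that $H^1(K_v,\Gamma)$ is finite for a finite group $\Gamma$ over a local field $K_v$ (since $K_v$ has only finitely many \'etale algebras of each degree). Doing this for every $v\in S$ shows that $T$ lies in finitely many $\prod_{v\in S}\SL_3(K_v)$-orbits, which is exactly the hypothesis needed to apply Theorem~\ref{th:BHC}. The fix to your argument is therefore minimal: replace the appeal to $H^1(K,\Stab_x)$ by the corresponding local statement at each $v\in S$.
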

\begin{proof}
Fix $n\geq 3$. Given Theorem \ref{th:BHC}, it is only necessary to prove that for any place $v$, the set $T$ breaks up into finitely many $\SL_3(K_v)$-equivalence classes. Since every element in $T$ is stable and has the same set of invariants, it follows that $T$ lies within a single $\SL_3(\overline{K_v})$-orbit, say the orbit of some $x\in V_n(K_v)$. Let $\Gamma$ denote the stabilizer of $x$ in $\SL_3(\overline{K_v})$, and note that $\Gamma$ is a finite group.
It is well known (see for example, \cite[\S I.5]{Serre_Gal_coh}) that we have an injection
\begin{equation*}
\SL_3(K_v)\backslash\bigl(\SL_3(\overline{K_v})\cdot x\cap V_n()\bigr)
\to H^1(G_{\overline{K_v}/K_v},\Gamma).
\end{equation*}
Since there are finitely many fixed degree \'etale extensions of $K_v$, this Galois cohomology group is finite, yielding the result.
\end{proof}

We are now ready to prove Theorems \ref{thm1} and \ref{thm2} in our case.

\medskip

\noindent{\bf Proof of Theorems \ref{thm1} and \ref{thm2} for $n=3$:}
Let $D\in\cO_K$ be nonzero. Then from Theorem \ref{th:Siegel}, we see that there are only finitely main pairs $(I,J)\in\O_K^2$ with $\Delta(I,J)=D$. Theorem \ref{thm1} now follows from a direct application of Corollary \ref{cor:BHC}, taking $S$ to be the set of infinite valuations.

We turn to Theorem \ref{thm2}. Let $S$ be a finite set of places, containing all the infinite places. Let $f$ be an element of $V(\O_{K,S})$ such that $C_f$ has good reduction outside $S$. Then $\Delta(f)$ is an element of $\cO_{K,S}^\times$, an $S$-unit. We first claim that by translating $f$ by an element of $\GL_3(\cO_{K,S})$, if necessary, we may ensure that $\Delta(f)$ sits inside a finite set. Indeed, acting in $f$ by an element in the center of $\GL_3(\cO_{K,S})$, we may multiply $f$ by an $S$-unit $u$. This has the effect of multiplying the discriminant of $f$ by $u^{36}$. The claim follows since $\cO_{K,S}^\times/(\cO_{K,S}^\times)^{36}$ is finite. Now that we only have to consider discriminants in a finite set, Theorem \ref{thm2} follows by applying Theorem \ref{th:Siegel} and Corollary \ref{cor:BHC} exactly as above. $\Box$

\subsection{The proof for general $n\geq 4$}

Let $n\geq 4$ be fixed. Recall that for an element $f\in V_n(\C)$ having nonzero discriminant, $C_f$ is a smooth curve of genus $(n-1)(n-2)/2\geq 2$. Our main diophantine input in this case is Faltings' theorem.

\begin{theorem}\label{th:Faltings}
Let $K$ be a number field, and let $S$ be a finite set of places containing all infinite places. Fix an integer $g\geq 2$. Then there are finitely many smooth projective curves $C/K$ of genus $g$ with good reduction outside $S$.
\end{theorem}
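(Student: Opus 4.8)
The plan is to deduce this statement — classically the Shafarevich conjecture for curves — from Faltings' finiteness theorem for \emph{abelian varieties}, by means of the Torelli theorem; I would not attempt to reprove the arithmetic core, which is genuinely deep, but rather import it as the diophantine black box.

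First I would invoke Faltings' Shafarevich conjecture for abelian varieties: for a fixed dimension $g$ and a fixed polarization degree, there are only finitely many isomorphism classes of polarized abelian varieties of dimension $g$ over $K$ with good reduction outside $S$. For the present application only the principally polarized case is needed. Next, to each smooth projective curve $C/K$ of genus $g\geq 2$ I associate its Jacobian, a principally polarized abelian variety $(J(C),\theta)$ of dimension $g$. The key compatibility is that good reduction of $C$ outside $S$ forces good reduction of $(J(C),\theta)$ outside $S$: a smooth proper model of $C$ over $\cO_{K,S}$ yields, upon passing to the relative Picard scheme, an abelian scheme over $\cO_{K,S}$ extending $J(C)$ together with its theta polarization. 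By the finiteness statement of the previous step, only finitely many principally polarized abelian varieties $(A,\lambda)$ arise as Jacobians of such curves.

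Finally I would apply the Torelli theorem to bound, for each such $(A,\lambda)$, the number of curves $C/K$ with $(J(C),\theta)\cong(A,\lambda)$. Over $\overline{K}$ a curve of genus $g\geq 2$ is recovered uniquely from its principally polarized Jacobian and has finite automorphism group; hence its $K$-forms are classified by a finite Galois cohomology set, so only finitely many curves over $K$ share a given Jacobian. Combining the two finiteness statements proves the theorem.

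The main obstacle is Step 1 itself — Faltings' theorem for abelian varieties, which rests on the semisimplicity of the Tate module, the isogeny conjecture, and the boundedness of Faltings heights within an isogeny class — none of which I would reprove here. Relative to that input, the only genuine work is the Torelli reduction, where one must be mildly careful that the Torelli map fails to be injective on the nose (the hyperelliptic locus, quadratic-twist phenomena); but its fibers are always finite, and that is all that is required.
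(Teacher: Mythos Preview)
Your sketch is correct and is the standard deduction of the Shafarevich conjecture for curves from Faltings' finiteness theorem for principally polarized abelian varieties, via the Torelli map; the only delicate point you flag --- that Torelli is not literally injective over $K$ but has finite fibers controlled by a finite Galois cohomology set in $H^1(G_K,\Aut(C_{\overline K}))$ --- is handled appropriately.

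However, the paper does not prove this theorem at all: it is stated without proof and invoked as a black-box diophantine input (``Our main diophantine input in this case is Faltings' theorem''), exactly as you yourself propose to treat Faltings' theorem for abelian varieties. So there is no ``paper's own proof'' to compare against; you have simply gone one layer deeper into the black box than the paper does, and what you wrote is the standard reduction.
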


To apply Faltings' theorem in our setting, we will also need the following well known result, whose proof we include for completeness.
\begin{proposition}\label{prop:curve_to_form}
Let $f_1$ and $f_2$ be elements in $V_n(\C)$ having nonzero discriminants. Let $C_1$ and $C_2$ denote the $($smooth$)$ plane curves cut out by $f_1$ and $f_2$, respectively, in $\mathbb{P}^2_\mathbb{C}$. Suppose that $C_1$ is isomorphic to $C_2$. Then $f_1$ and $f_2$ are $\mathrm{GL}_3(\mathbb{C})$-equivalent.
\end{proposition}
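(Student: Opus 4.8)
The plan is to upgrade the abstract isomorphism $C_1\cong C_2$ to a projective one. Concretely, it suffices to produce a projective-linear isomorphism $\psi\colon\P^2_\C\to\P^2_\C$ with $\psi(C_1)=C_2$: such a $\psi$ is induced by some $\gamma\in\GL_3(\C)$, and then $\gamma\cdot f_1$ cuts out the same curve as $f_2$, so $\gamma\cdot f_1=c\,f_2$ for a scalar $c\in\C^\times$. Since the center of $\GL_3(\C)$ acts on $V_n$ by $f\mapsto\lambda^n f$ and every element of $\C^\times$ is an $n$-th power, we may absorb $c$ and conclude that $f_1$ and $f_2$ are $\GL_3(\C)$-equivalent.

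To build $\psi$, I would start from an abstract isomorphism $\phi\colon C_1\xrightarrow{\sim}C_2$, which exists by hypothesis, and recover the plane embeddings from $\phi$. The embedding $C_i\hookrightarrow\P^2_\C$ is given by the complete linear system $|\cO_{C_i}(1)|$: from the ideal sequence $0\to\cO_{\P^2}(1-n)\to\cO_{\P^2}(1)\to\cO_{C_i}(1)\to 0$ together with the vanishing of $H^0$ and $H^1$ of $\cO_{\P^2}(1-n)$, the restriction map identifies $H^0(C_i,\cO_{C_i}(1))$ with the three-dimensional space $H^0(\P^2,\cO(1))$. Hence $\P^2_\C=\P\,H^0(C_i,\cO_{C_i}(1))^\ast$, together with the tautological embedding of $C_i$, is recovered from the line bundle $\cO_{C_i}(1)$ alone. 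Consequently, once I know that $\phi^\ast\cO_{C_2}(1)\cong\cO_{C_1}(1)$, the pullback $\phi^\ast$ gives a linear isomorphism of the two spaces of sections, hence the desired $\psi$ carrying $C_1$ to $C_2$.

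Everything therefore reduces to showing that $\cO_C(1)$ is intrinsic to the abstract curve $C=C_i$, i.e.\ preserved by $\phi$. The starting point is adjunction: $K_C\cong\cO_C(n-3)$, and since the canonical bundle is preserved by any isomorphism, $\phi^\ast K_{C_2}=K_{C_1}$. When $n=4$ this already finishes the argument, as then $\cO_C(1)=K_C$ and $C$ is exactly the canonical genus-$3$ curve. For $n\ge 5$ I would use the canonical embedding, which is intrinsic because a smooth plane curve of degree $\ge 4$ is non-hyperelliptic. Since $\cO_C(n-3)=K_C$ and the restriction $H^0(\P^2,\cO(n-3))\xrightarrow{\sim}H^0(C,K_C)$ is an isomorphism (the same ideal-sequence computation, with $g=(n-1)(n-2)/2$), the canonical map factors as $C\hookrightarrow\P^2_\C\xrightarrow{v_{n-3}}\P^{g-1}$, where $v_{n-3}$ is the $(n-3)$-uple Veronese embedding; thus the canonical image of $C$ lies on a Veronese surface $V\subset\P^{g-1}$. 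The isomorphism $\phi$ extends to a projective automorphism $\Phi$ of $\P^{g-1}$ preserving the canonical curve, and if $V$ is the unique Veronese surface through this canonical curve, then $\Phi$ carries $V_1$ to $V_2$ and descends to the sought projective isomorphism of the planes, forcing $\phi^\ast\cO_{C_2}(1)=\cO_{C_1}(1)$.

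The hard part will be this last uniqueness statement for $n\ge 5$—equivalently, that $|\cO_C(1)|$ is the unique $g^2_n$ on $C$, so that the plane model of a smooth plane curve of degree $\ge 4$ is unique. The subtlety is that adjunction only pins down $\cO_C(1)$ up to an $(n-3)$-torsion line bundle in $\Pic(C)$, and ruling out all such twists is precisely the classical content; rather than attempt this torsion analysis by hand, I would invoke the classical uniqueness of the plane model (equivalently, the uniqueness of the Veronese surface through the canonical curve) to complete the argument.
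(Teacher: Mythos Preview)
Your proposal is correct and follows essentially the same route as the paper: both arguments reduce to the classical fact that a smooth plane curve of degree $n\ge 4$ has a unique $g^2_n$, namely $|\cO_C(1)|$, which the paper simply cites (ACGH, Landesman) and you eventually invoke as well. Your explicit handling of $n=4$ via adjunction ($\cO_C(1)=K_C$) is a nice self-contained case, but the Veronese detour for $n\ge 5$ does not actually buy anything, since you yourself note that uniqueness of the Veronese surface through the canonical curve is equivalent to the uniqueness of the $g^2_n$ you end up citing anyway.
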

\begin{proof}
Let $C$ be a smooth plane curve over $\C$ of degree $n\geq 4$. It is known that $C$ has a unique $g_n^2$, i.e., a linear system of dimension $2$ and degree-$n$. Moreover, this $g_d^2$ of $C$ is $|\O_C(1)|$, which satisfies $h^0(C,\O_C(1))=3$. This is the content of \cite[Exercise 18]{ACGH}, and is proved in, for example, \cite[Proposition 2.6]{Landesman}. Hence, the set of embeddings of $C$ into $\P^2_\C$ are in bijection with the set of choices of an ordered basis for $H^0(C,\O_C(1))$, up to scaling. Therefore, the images of any two embeddings are $\GL_3(\C)$-equivalent, yielding the claim.
\end{proof}

We are now ready to prove Theorem \ref{thm1}:

\medskip

\noindent {\bf Proof of Theorem \ref{thm1}:} Let $D\in\O_K$ be a fixed nonzero element, and let $T(D)$ denote the set of elements in $V_n(\O_K)$ with discriminant $D$. For any $f\in T(D)$, the curve $C_f$ has good reduction away from the places dividing $D$. Hence, by Faltings' theorem, there are only a finite number of choices for the isomorphism class of $C_f/K$. Applying Proposition \ref{prop:curve_to_form}, we see that $T(D)$ breaks up into finitely many $\GL_3(\C)$-equivalence classes.
Moreover, for $\gamma\in\GL_3(\C)$ and $f\in V_n(\C)$, we have $\Delta(\gamma f)=(\det \gamma)^{n(n-1)^2}\Delta(\gamma)$. It therefore follows that in fact $T(D)$ breaks up into finitely many $\SL_3(\C)$-equivalence classes. Let $T(D)_1$ be one of these equivalence classes, and take $f_1$ and $f_2$ be two elements in $S(D)_1$. Then since $f_1$ and $f_2$ are $\SL_3(\C)$-equivalent, it follows that $P(f_1)=P(f_2)$ for every polynomial $\SL_3(\Z)$-invariant $P$. Thus, we may apply Corollary \ref{cor:BHC} (taking $S$ to be the set of infinite places of $K$) to conclude that $T(D)_1$ breaks up into finitely many $\SL_3(\O_K)$-orbits. Theorem \ref{thm1} now follows. $\Box$

\medskip

Before we prove Theorem \ref{thm2}, it will be convenient for us to set up some notation. Let $V$ be $V_n$ for some $n\geq 2$ or $V_{2,2}$. We denote the ring of $\SL_3(\Z)$-invariant integer polynomials in the coefficients of $V$ by $\Z[V]^{\SL_3(\Z)}$, which inherits the graded structure on $\Z[V]$. The action of $\GL_3$ on $V$ induces a natural action of $\G_m$ on $\Z[V]^{\SL_3(\Z)}$, such that for any ring $R$, any homogeneous $\SL_3(R)$-invariant polynomial $P\in V[R]$, any $v\in V(R)$, and any $\gamma\in\GL_3(R)$, we have $P(\gamma v)=(\det\gamma.P)(v)$. (Here, we are denoting the action of $\G_m(R)=R^\times$ on $R[V]^{\SL_3(R)}$ with the notation $\lambda.P$ for $\lambda\in R^\times$, and an invariant polynomial $P$.)

We denote the GIT quotient $V\sslash \SL_3=\spec \Z[V]^{\SL_3(\Z)}$ by $\I$. For a ring $R$, we have the map $\iota: V(R)\to \I(R)$, where we regard $\I(R)$ to be the space of invariants and $\iota$ as the map sending an element of $V(R)$ to its invariants.
Since the ring of invariants for the action of $\SL_3(\Z)$ on $V(\Z)$ is finitely generated, we can think of $\I_R$ as a subvariety, defined over $\Z$, of some affine space $\A^m_R$. The action of $R^\times$ on $R[V]^{\SL_3(R)}$ gives an action of $R^\times$ on $\I(R)$.

Let $K$ be a number field, and let $S$ be a finite set of places in $K$ containing all the archimedean places.  For any prime $\mathfrak{p}$ of $K$ not contained in $S$, the reduction modulo $\mathfrak{p}$ map $\I(\cO_{K,S})\to \I(\O_K/\mathfrak{p})$ is well defined. Let $\I(\O_{K,S})'\subset\I(\cO_{K,S})$ denote the set of elements in $\I(\cO_{K,S})$ which do not map to $0$ (the point $(0,0,\ldots,0)\in(\O_{K}/\pp)^m$) for every prime $\pp$ not in $S$. Note that $\I(\O_{K,S})'$ is $\O_{K,S}^\times$-invariant.
We have the following lemma:

\begin{lemma}\label{finite fiber pi}
The map $\pi$
\begin{equation*}
\O_{K,S}^\times\backslash\I(\O_{K,S})'=:\overline{\I(\O_{K,S})'}\xlongrightarrow{\pi}\overline{\I(\C)}:=\C^\times\backslash \I(\C)
\end{equation*}
has finite fibers.
\end{lemma}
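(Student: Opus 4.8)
The plan is to show that each fiber of $\pi$ is finite by reducing to a statement about $S$-units, the key tool being an analysis of $\pp$-adic valuations at the primes $\pp\notin S$. First I would reduce to a single $\C^\times$-orbit. Fix a point of $\overline{\I(\C)}$ represented by $c\in\I(\C)$. Since $\pi$ is induced by the inclusion $\I(\cO_{K,S})'\hookrightarrow\I(\C)$ followed by the quotient by $\C^\times$, and the $\cO_{K,S}^\times$-action is the restriction of the $\C^\times$-action, every $v\in\I(\cO_{K,S})'$ whose class lies in the fiber is a $\C^\times$-multiple of $c$; hence any two such elements are $\C^\times$-multiples of one another. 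Fixing one representative $v_0\in\I(\cO_{K,S})'$ of the fiber (if the fiber is empty there is nothing to prove), it suffices to show that
\begin{equation*}
\{\nu\cdot v_0 : \nu\in\C^\times\}\cap\I(\cO_{K,S})'
\end{equation*}
breaks into finitely many $\cO_{K,S}^\times$-orbits.

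Next I would make the $\G_m$-action explicit. Choosing homogeneous generators $P_1,\dots,P_m$ of $\Z[V]^{\SL_3(\Z)}$ realizes $\I\hookrightarrow\A^m$; each $P_i$ is nonconstant, and since a scalar matrix acts on $V$ by a positive power of the scalar, the $\G_m$-weight $w_i$ of $P_i$ is a \emph{positive} integer. The $\C^\times$-action on $\I(\C)\subset\C^m$ is then diagonal, $(\nu\cdot a)_i=\nu^{w_i}a_i$. Writing $v=\nu\cdot v_0$ for an element of the set above and using $v,v_0\in\I(\cO_{K,S})\subset K^m$, for every index $i$ with $(v_0)_i\neq 0$ we get $\nu^{w_i}=v_i/(v_0)_i\in K^\times$.

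The heart of the argument, and the step I expect to be the main obstacle, is to show that scaling by $\nu$ cannot change any valuation outside $S$. Fix $\pp\notin S$. From $(\nu^{w_i})^{w_j}=(\nu^{w_j})^{w_i}$ in $K^\times$ one checks that $s_\pp:=v_\pp(\nu^{w_i})/w_i$ is a rational number independent of the chosen index $i$, so that $v_\pp(v_i)=w_i\,s_\pp+v_\pp((v_0)_i)$ for all $i$. Integrality of $v$ forces $w_i s_\pp+v_\pp((v_0)_i)\geq 0$ for all $i$; combined with the fact that $v_0\in\I(\cO_{K,S})'$ is non-degenerate at $\pp$ (so some coordinate has $v_\pp((v_0)_{i_0})=0$), this gives $s_\pp\geq 0$. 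On the other hand, non-degeneracy of $v$ at $\pp$ means $\min_i\bigl(w_i s_\pp+v_\pp((v_0)_i)\bigr)=0$; since every $w_i>0$ and every $v_\pp((v_0)_i)\geq 0$, a value $s_\pp>0$ would make all terms strictly positive, a contradiction, so $s_\pp\leq 0$. Hence $s_\pp=0$ for every $\pp\notin S$, and each $\nu^{w_i}$ (with $(v_0)_i\neq 0$) is an $S$-unit. The delicate point is precisely the interplay of integrality with the non-degeneracy condition at \emph{both} $v$ and $v_0$, together with the positivity of all weights $w_i$: this positivity is what upgrades the two inequalities into the equality $s_\pp=0$.

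Finally I would conclude using Dirichlet's $S$-unit theorem. Setting $d:=\gcd\{w_i:(v_0)_i\neq 0\}$ and applying B\'ezout, $\tau:=\nu^{d}$ lies in $\cO_{K,S}^\times$, and the coordinates of $v$ are $\tau^{w_i/d}(v_0)_i$ (and $0$ where $(v_0)_i=0$). Because $\gcd\{w_i/d\}=1$, the element $v$ determines $\tau$ uniquely, while the $\cO_{K,S}^\times$-action $v\mapsto\lambda\cdot v$ corresponds to $\tau\mapsto\lambda^{d}\tau$. Thus the $\cO_{K,S}^\times$-orbits in the fiber inject into $\cO_{K,S}^\times/(\cO_{K,S}^\times)^{d}$, which is finite since $\cO_{K,S}^\times$ is finitely generated. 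This shows the fiber of $\pi$ is finite, as required.
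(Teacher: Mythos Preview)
Your proof is correct and follows essentially the same approach as the paper's: both fix a representative in the fiber, analyze $\pp$-adic valuations at primes $\pp\notin S$ (using the non-degeneracy condition on \emph{both} elements together with positivity of the weights) to show that the relevant power of the scaling parameter is an $S$-unit, and then conclude via the finiteness of $\cO_{K,S}^\times/(\cO_{K,S}^\times)^d$. Your version is in fact slightly more careful in one respect: you take $d=\gcd\{w_i:(v_0)_i\neq 0\}$ over only the nonzero coordinates, whereas the paper writes $d=\gcd(n_1,\dots,n_m)$ over all indices, which strictly speaking requires the same restriction for the step ``$\alpha^{n_i}\in K$'' to be valid.
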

\begin{proof}
Let $(I_1,\dots,I_m) \in \mathcal{I}(\mathcal{O}_{K,S})'$ be fixed and denote by $n_i$ the weighted degree of the invariant $I_i$, i.e., for $\lambda\in\C^\times$, we have $\lambda.I_i=\lambda^{n_i}I_i$. We aim to show that the set of tuples $(J_1, \dots, J_m) \in \mathcal{I}(\mathcal{O}_{K,S})'$ satisfying
\begin{equation*} 
    (\alpha^{n_1} I_1, \dots, \alpha^{n_m} I_m) = (J_1, \dots, J_m) \quad \text{for some} \quad \alpha \in \mathbb{C}
\end{equation*}
contains only finitely many elements under the action of $\mathcal{O}_{K,S}^\times$. 

Since each $J_i$ is an element of $\mathcal{O}_{K,S}$, it follows that $\alpha^{n_i}$ must lie in $K$, which implies that $\alpha^d \in K$, where $d = \gcd(n_1, \dots, n_m)$.

By the definition of $\mathcal{I}'(\mathcal{O}_{K,S})$, for any prime $\pp \notin S$, we have
\begin{equation*} 
    \min \{ v_{\pp}(I_1), \dots, v_{\pp}(I_m) \} = \min \{ v_{\pp}(J_1), \dots, v_{\pp}(J_m) \} = 0.
\end{equation*}
In particular, there exists some index $i$ such that $v_{\pp}(I_i) = 0$, which forces $  v_{\pp}(J_i) = n_i/d \cdot v_{\pp}(\alpha^d)$. Thus, we obtain $v_{\pp}(\alpha^d) \geq 0$.

Next, consider some index $k$ for which $v_{\pp}(J_k) = 0$. Under the given assumptions, we have $v_{\pp}(I_k) \geq 0$. Thus, we obtain
\begin{equation*} 
    v_{\pp}(I_k) + n_k/d \cdot v_{\pp}(\alpha^d) = 0.
\end{equation*}
Since both terms are nonnegative, it follows that $v_{\pp}(\alpha^d) = 0$, meaning that $\alpha^d$ is a unit in $\mathcal{O}_{K,S}^\times$. Finally, because the quotient group $\mathcal{O}_{K,S}^\times / (\mathcal{O}_{K,S}^\times)^d$ is finite, we conclude that there are only finitely many $\mathcal{O}_{K,S}^\times$-equivalence classes of such tuples.
\end{proof}

We are now ready to prove our second main result on ternary $n$-ic forms.

\medskip

\noindent {\bf Proof of Theorem \ref{thm2}:} Let $n\geq 4$ be fixed, let $K$ be a number field, and $S$ be a finite set of places of $K$ containing all the infinite places. Let $T(K,S)$ denote the set of elements $f$ in $V_n(\O_{K,S})$ such that $f$ has good reduction outside $S$. We say that two elements \( f \) and \( g \) in \( T_n(K; S) \) are  \( \sim_{\mathbb{C}} \)-equivalent if there exists an element \( \gamma \in \mathrm{GL}_3(\mathbb{C}) \) such that \( \gamma \cdot f = g \). This gives us a natural map
\begin{equation*}
  \begin{tikzcd}
    \GL_3(\O_{K,S}) \backslash T_n(K; S) \arrow{r}{\pi'}  & \sim_{\mathbb{C}} \backslash T_n(K; S).
  \end{tikzcd}
\end{equation*}
Since the elements in $T_n(K,S)$ correspond to curves that are smooth over $\C$ (indeed, their discriminants do not vanish modulo places not in $S$), it follows that the invariant map $\iota$ maps $T_n(K,S)$ into $\I(\O_{K,S})'$. Hence we have the commutative diagram
\begin{equation*}
  \begin{tikzcd}
    \GL_3(\O_{K,S}) \backslash T_n(K; S) \arrow{r}{\pi'} \arrow[swap]{d}{\iota} & \sim_{\mathbb{C}} \backslash T_n(K; S) \arrow{d}{\iota} \\
    \overline{\mathcal{I}(\mathcal{O}_{K, S})'} \arrow{r}{\pi} & \overline{\mathcal{I}(\mathbb{C})}
  \end{tikzcd}
\end{equation*}
where \( \overline{\mathcal{I}(\mathcal{O}_{K, S})'} \) and \( \overline{\mathcal{I}(\mathbb{C})} \) are as defined previously.

By an argument identical to the proof of Theorem \ref{thm1} above, it follows that $T_n(K,S)$ breaks up into finitely many $\GL_3(\C)$-equivalence classes. Hence the image of $\iota(\sim_{\mathbb{C}} \backslash T_n(K; S)$ in $\overline{\I(\C)}$ is finite. It follows from Lemma \ref{finite fiber pi} that the image of $\iota\big(\GL_3(\O_{K,S}) \backslash T_n(K; S)\big)$ in $\overline{\I(\O_{K,S})'}$ is also finite. Therefore, there exists a finite set of elements in $\I'\subset \I(\O_{K,S})$ such that every ternary $n$-ic form in $T_n(K,S)$ is $\GL_3(\cO_{K,S})$-equivalent to some $f\in T_n(K,S)$ satisfying $\iota(f)\in\I'$. Theorem \ref{thm2} now follows from a direct application of Corollary \ref{cor:BHC}. $\Box$

\section{Finiteness results for $\GL_3$-orbits on $V_{2,2}$}

In this section, we prove Theorem \ref{thm22}.

\subsection{Finiteness of generic K3 surfaces with fixed branch curves}

In this subsection, we work over the field of complex numbers. Recall that if $X$ is a generic K3 surface in $F_\C$, then for $i\in\{1,2\}$, the projections maps $p_i:F_\C\to\P^2_\C$ give two double covers $p_i:X\to\P^2_\C$. We are further assuming that the Picard rank of $X$ over $\C$ is $2$.  Let $C_1$ and $C_2$ be two smooth degree-$6$ curves in $\P^2_\C$. We let $\FF_{C_1,C_2}$ denote the set of generic K3 surfaces in $F_\C$ whose two branch curves $B_1$ and $B_2$ are $\GL_3(\C)$-equivalent to $C_1$ and $C_2$, respectively. It follows from Propositions \ref{prop_const_covariants} and \ref{prop_branch_covariants} that $\FF_{C_1,C_2}$ is closed under the natural action of $\GL_3(\C)$. The goal of this subsection is to prove the following result.

\begin{theorem}\label{thm:FFCC_fin}
For any pair of smooth degree-$6$ curves $C_1$ and $C_2$ in $\P^2_\C$, the set $\FF_{C_1,C_2}$ breaks up into finitely many $\GL_3(\C)$-orbits.
\end{theorem}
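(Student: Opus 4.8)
The plan is to translate the statement into a finiteness question about line bundles on a single fixed K3 surface, via the theory of double covers of $\P^2_\C$. We may assume $\FF_{C_1,C_2}$ is nonempty. First I would fix the underlying abstract surface. For $v\in\FF_{C_1,C_2}$, the map $p_1\colon X_v\to\P^2_\C$ is a double cover branched over a smooth sextic $B_1$ that is $\GL_3(\C)$-equivalent to $C_1$; since $\Pic(\P^2)=\Z\cdot\O(1)$ is torsion-free, the double cover of $\P^2_\C$ branched over a fixed smooth sextic is unique, so every $X_v$ is isomorphic, as an abstract surface, to the single K3 surface $Y$ that is the double cover of $\P^2_\C$ branched over $C_1$. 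The Picard-rank-$2$ hypothesis then forces $Y$ to have Picard rank $2$, which I will use crucially below.

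Next I would record the two polarizations. Set $L_1=p_1^*\O(1)$ and $L_2=p_2^*\O(1)$ inside $\mathrm{NS}(X_v)\cong\mathrm{NS}(Y)$; both are ample, being pullbacks of $\O(1)$ under finite double covers. Computing intersection numbers inside $\P^2_\C\times\P^2_\C$, where $F_\C$ has class $h_1+h_2$ and $X_v$ is cut out in $F_\C$ by a $(2,2)$-form, so that $[X_v]=2(h_1+h_2)^2$, I would obtain the Gram matrix $\left(\begin{smallmatrix}2&4\\4&2\end{smallmatrix}\right)$; in particular $L_1^2=L_2^2=2$ and $L_1\cdot L_2=4$. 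Thus $v\mapsto(X_v;L_1,L_2)$ sends each $\GL_3(\C)$-orbit in $\FF_{C_1,C_2}$ to an isomorphism class of such triples, equivalently to an $\Aut(Y)$-orbit on ordered pairs $(L_1,L_2)\in\mathrm{NS}(Y)^2$ with this Gram matrix and both classes ample. It then suffices to show that this assignment is injective on orbits and that its target is finite.

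The hard part — and the main obstacle — is the reconstruction (injectivity): I must show that if $(X_v;L_1,L_2)\cong(X_{v'};L_1',L_2')$ then $v$ and $v'$ are $\GL_3(\C)$-equivalent. Each $p_i$ is the map associated to the complete linear system $|L_i|$ (which is three-dimensional since $L_i^2=2$ gives $h^0(L_i)=3$ by Riemann--Roch), so an isomorphism $\psi$ of triples induces, via $\psi^*$, an element $(g_1,g_2)\in\PGL_3\times\PGL_3$ carrying the embedding $X_v\hookrightarrow\P^2_\C\times\P^2_\C$ to $X_{v'}\hookrightarrow\P^2_\C\times\P^2_\C$. The crux is to prove that the flag incidence form $\sum x_iz_i$ is the \emph{unique} $(1,1)$-form vanishing on $X_v$: by Riemann--Roch on the K3 surface one has $h^0(X_v,L_1+L_2)=2+\tfrac12(L_1+L_2)^2=8$, whereas $h^0\bigl(\P^2_\C\times\P^2_\C,\O(1,1)\bigr)=9$, and I expect the genericity hypothesis to force the restriction map to be surjective, so that its kernel is exactly one-dimensional. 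Granting this, $(g_1,g_2)$ must preserve $F_\C$, and the subgroup of $\PGL_3\times\PGL_3$ fixing $\sum x_iz_i$ is precisely $\{(\gamma,\delta(\gamma))\}$, the image of the $\GL_3$-action; together with Proposition \ref{prop_const_covariants} this yields $v\sim_{\GL_3(\C)}v'$ (the residual scalar is absorbed, since the center of $\GL_3$ scales $V_{2,2}$ and $\C$ is algebraically closed). Establishing the uniqueness of the incidence form, and matching its stabilizer with the image of $\GL_3$, is where the real work lies.

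Finally I would prove finiteness of the target. By Sterk's theorem (the cone conjecture for K3 surfaces), $\Aut(Y)$ acts on the nef cone of $Y$ with a rational polyhedral fundamental domain, so there are only finitely many $\Aut(Y)$-orbits of ample classes $L_1$ with $L_1^2=2$. For each fixed $L_1$, I would use the rank-$2$ hypothesis: writing $L_2=\alpha L_1+\beta w$ with $w\perp L_1$ in $\mathrm{NS}(Y)\otimes\Q$ and $w^2<0$ (the signature being $(1,1)$ and $L_1^2>0$), the conditions $L_1\cdot L_2=4$ and $L_2^2=2$ force $\alpha=2$ and $\beta^2w^2=-6$, leaving at most two possibilities for $L_2$. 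Hence there are finitely many $\Aut(Y)$-orbits of admissible pairs $(L_1,L_2)$. Combining this with the reconstruction step, the map from $\GL_3(\C)$-orbits to these finitely many isomorphism classes of triples is injective with finite image, so $\FF_{C_1,C_2}$ breaks up into finitely many $\GL_3(\C)$-orbits, as claimed.
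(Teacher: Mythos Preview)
Your argument is correct and follows a genuinely different path from the paper's. The paper invokes Catanese on the Chisini conjecture to bound doubled-plane structures with a given branch curve, assigns to equivalent $X,X'\in\FF_{C_1,C_2}$ an automorphism $\tau\in\Aut(X)$, constrains $\tau^*$ on $\Pic(X)$ via the explicit rank-$2$ Gram matrix and the index bound of Lemma~\ref{Lattice}, and then recovers the $\GL_3(\C)$-orbit of $X'$ from $\tau$ by a concrete point-and-line computation in $\P^2_\C\times\P^2_\C$. You instead fix the abstract surface once and for all (the double cover of $\P^2_\C$ over a smooth sextic is unique because $\Pic(\P^2)$ is torsion-free, so Chisini is not needed), translate the problem into $\Aut(Y)$-orbits on ordered pairs of polarizations, and finish with Sterk's theorem plus an elementary lattice count. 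The surjectivity you flag as the obstacle is in fact routine and needs no genericity beyond smoothness: from $0\to\O_F(-1,-1)\to\O_F(1,1)\to\O_{X_v}(1,1)\to 0$ one only needs $H^1(F_\C,\O_F(-1,-1))=0$, and this follows via K\"unneth on $\P^2\times\P^2$ (all cohomology of $\O(-1,-1)$ and $\O(-2,-2)$ vanishes there), so the kernel of restriction from $H^0(\P^2\times\P^2,\O(1,1))$ is exactly $\C\cdot\sum x_iz_i$. Your stabilizer claim is then immediate linear algebra. One bonus of your route: the rank-$2$ hypothesis in your finiteness step is actually unnecessary, since writing $L_2=2L_1+w$ with $w\in L_1^\perp$ forces $w^2=-6$ and $L_1^\perp\subset\mathrm{NS}(Y)$ is negative definite in \emph{any} rank; thus your method proves the theorem without the Picard-rank-$2$ assumption, whereas the paper's Proposition~\ref{prop:tau_choices} genuinely requires $H_1,H_2$ to span $\Pic(X)\otimes\Q$ in order to pin down $\tau^*$.
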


A double plane over $\C$ is a pair $(X,f)$, where $X$ is a smooth surface over $\C$, and $f:X\to\P^2_\C$ is a double cover. The branch curve of $p$ will then be a smooth curve in $\P^2_\C$. We say that two double planes $(X,f)$ and $(X',f')$ are {\it equivalent} if there exists an isomorphism $\phi:X\to X'$ such that $f'\circ \phi=f$. The following result is an immediate consequence of Catanese's work on the Chisini conjecture \cite{chisini}.

\begin{proposition}\label{prop:chisni}
Let $B$ be a smooth curve in $\P^2_\C$. Then there are finitely many equivalence classes of double planes over $\C$ with branch curve $B$. 
\end{proposition}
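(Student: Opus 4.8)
The plan is to reduce the statement to the classical structure theory of double covers of $\P^2_\C$ and then exploit the rigidity coming from $\Pic(\P^2_\C)\cong\Z$. First I would recall the standard classification: if $(X,f)$ is a double plane with $f\colon X\to\P^2_\C$ finite of degree $2$ and smooth branch curve $B$, then the sheaf of $\O_{\P^2}$-algebras $f_*\O_X$ splits as $\O_{\P^2}\oplus L^{-1}$ for a line bundle $L$ on $\P^2_\C$, and its multiplication is encoded by a section $s\in H^0(\P^2_\C,L^{\otimes 2})$ whose vanishing locus is exactly $B$. Conversely, any pair $(L,s)$ with smooth zero divisor recovers a smooth double cover $X=\mathrm{Spec}_{\P^2}(\O_{\P^2}\oplus L^{-1})\to\P^2_\C$. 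The essential observation is that the equivalence relation in the statement — an isomorphism $\phi\colon X\to X'$ commuting with the projections to $\P^2_\C$ — corresponds precisely to an isomorphism of the pairs $(L,s)$, where $s$ is taken up to rescaling by $\C^\times$ (scaling $s$ changes the defining equation of $B$ but not $B$ itself).

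Next I would note that fixing the branch curve $B$ pins down $s$ up to a scalar and forces $L^{\otimes 2}\cong\O_{\P^2}(B)$. Thus the equivalence classes of double planes with branch curve $B$ are in bijection with the set of square roots of $\O_{\P^2}(B)$ in $\Pic(\P^2_\C)$. Since $\Pic(\P^2_\C)\cong\Z$ is torsion-free, this set is a torsor under the $2$-torsion subgroup $\Pic(\P^2_\C)[2]=0$, so it has at most one element: there is at most one double plane over a fixed $B$ up to equivalence, which is strictly stronger than the asserted finiteness. (When $\deg B$ is odd the set is empty and the statement holds vacuously; when $\deg B=6$ the unique square root is $\O_{\P^2}(3)$, consistently yielding a K3 surface.)

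The only point requiring care — and the main, though minor, obstacle — is the faithful translation of the geometric equivalence relation into the algebraic datum $(L,s)$: one must verify that an abstract isomorphism $\phi$ over $\P^2_\C$ both induces and is induced by an isomorphism of the algebras $f_*\O_X$, and that smoothness of $X$ is equivalent to smoothness of $B$ (this is where $\mathrm{char}\neq 2$ and the smoothness hypothesis on $B$ enter). This is exactly the degree-$2$ instance of Catanese's analysis of the Chisini problem: the monodromy of $f$ takes values in $S_2=\Z/2\Z$ and is therefore recorded by a single class in $H^1(\P^2_\C\setminus B,\Z/2\Z)$ cutting out the cover, and over $\P^2_\C$ this class is unique. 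I would present the $\Pic$-theoretic argument as the primary proof and remark that it is the transparent specialization of Catanese's finiteness result to smooth branch curves and double covers.
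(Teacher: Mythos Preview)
Your proof is correct and in fact establishes the stronger statement that there is \emph{at most one} equivalence class of double planes over a fixed smooth $B$ (exactly one when $\deg B$ is even, none otherwise). The route is genuinely different from the paper's and more elementary: you invoke the standard structure theorem for flat double covers in characteristic $\neq 2$, namely $f_*\O_X\cong\O_{\P^2}\oplus L^{-1}$ with algebra structure encoded by a section of $L^{\otimes 2}$ vanishing on $B$, and then reduce the classification to a torsor under $\Pic(\P^2_\C)[2]=0$. The paper instead invokes Catanese's general work on the Chisini problem, associating to a pair of covers an invariant $\eta(X,X')$ lying in $\Pic(B)[2]$ (not $\Pic(\P^2)[2]$) and appealing to finiteness of that group. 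Your argument is sharper in this specific setting, since $\Pic(\P^2_\C)$ is torsion-free; the paper's argument yields only finiteness here but sits inside a framework that also handles higher-degree covers and singular branch loci. Either approach suffices for the sequel, as only finiteness is used downstream.
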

\begin{proof}
Let $(X, f)$ and $(X', f')$ be two double covers of $\mathbb{P}^2$ branched over the same curve $B$. Consider the map
\begin{equation*}
    \varphi' = (f')^{-1} \circ f: R \to R',
\end{equation*}
where $R$ and $R'$ denote the ramification divisors of $f$ and $f'$, respectively. Since $f|_R$ and $f'|_{R'}$ are both isomorphisms onto $B$, it follows that $\varphi'$ is an isomorphism.
Let $N_{R/X}$ and $N_{R'/X'}$ denote the normal bundles of $R$ in $X$ and $R'$ in $X'$, respectively.
From \cite[Proposition 2.5]{chisini}, it follows that the element 
\begin{equation*}
    \eta(X, X') = N_{R/X}^{-1}\otimes (\varphi')^* N_{R'/X'}
\end{equation*}
is always a $2$-torsion element of $\operatorname{Pic}(R)\cong\Pic(B)$. Moreover, the main theorem in \cite{chisini} states that $\eta(X,X')$ is trivial if and only if $(X, f)$ and $(X', f')$ are equivalent as doubled planes.
These two results realize the set of equivalence classes of doubled planes over $\C$ with branch curves $B$ as a principal homogeneous space of a subgroup of $\Pic(B)[2]$. Since $\Pic(B)[2]$ is finite, this completes the proof.
\end{proof}

We apply the above result in our setting. Let $C_1$ and $C_2$ be two smooth degree-$6$ curves in $\P^2_\C$. We say that two generic K3 surfaces $X$ and $X'$ in $\FF_{C_1,C_2}$ are {\it equivalent} if there exist elements $\gamma_1,\gamma_2,\gamma_1',\gamma_2'\in\GL_3(\C)$ such that the pair of double planes $(X,\gamma_i\circ p_i)$ and $(X',\gamma_i'\circ p_i)$ are branched at $C_i$, and equivalent for $i\in\{1,2\}$. Then we have the following immediate consequence of Proposition \ref{prop:chisni}:
\begin{lemma}\label{lem:FCC_finite_eq_classes}
Fix two smooth degree-$6$ curves $C_1$ and $C_2$ in $\P^2_\C$. Then the set $\FF_{C_1,C_2}$ consists of finitely many equivalence classes.
\end{lemma}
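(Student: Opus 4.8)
The plan is to exhibit an injection from the set of equivalence classes of $\FF_{C_1,C_2}$ into a finite set built directly out of the double-plane classification of Proposition \ref{prop:chisni}. The key observation is that the equivalence relation defining $\FF_{C_1,C_2}$ is, by design, nothing more than the requirement that the two associated double planes agree in Catanese's sense; so the lemma should be a direct repackaging of Proposition \ref{prop:chisni}.

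First I would fix $X\in\FF_{C_1,C_2}$ and unpack its data. By the definition of $\FF_{C_1,C_2}$, the branch curve $B_i$ of the double cover $p_i:X\to\P^2_\C$ is $\GL_3(\C)$-equivalent to $C_i$ for each $i\in\{1,2\}$. Choosing $\gamma_i\in\GL_3(\C)$ with $\gamma_i(B_i)=C_i$, the pair $(X,\gamma_i\circ p_i)$ is a double plane whose branch curve is exactly $C_i$. This associates to $X$ a point
\begin{equation*}
\Phi(X)=\bigl(\,[(X,\gamma_1\circ p_1)]\,,\,[(X,\gamma_2\circ p_2)]\,\bigr)
\end{equation*}
in the product $\mathcal{D}_{C_1}\times\mathcal{D}_{C_2}$, where $\mathcal{D}_{C_i}$ denotes the set of equivalence classes of double planes with branch curve $C_i$.

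Second, Proposition \ref{prop:chisni} tells us that each $\mathcal{D}_{C_i}$ is finite, so the target of $\Phi$ is finite. It then only remains to check that two surfaces with the same image are equivalent in $\FF_{C_1,C_2}$: if $\Phi(X)=\Phi(X')$, with the chosen elements $\gamma_i$ for $X$ and $\gamma_i'$ for $X'$, then for each $i$ the double planes $(X,\gamma_i\circ p_i)$ and $(X',\gamma_i'\circ p_i)$ are both branched at $C_i$ and are equivalent as double planes, which is precisely the condition defining equivalence of $X$ and $X'$. Hence $\Phi$ descends to an injection of the set of equivalence classes of $\FF_{C_1,C_2}$ into a finite set, yielding the lemma.

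I do not expect a genuine obstacle here, since the statement is essentially tautological given Proposition \ref{prop:chisni}. The one point requiring a little care is that $\Phi$ is defined only after a choice of the $\gamma_i$, and different choices could a priori produce different double-plane classes (the ambiguity being controlled by the automorphisms of $\P^2_\C$ fixing $C_i$). This is harmless for the present purpose: I only need finiteness of the equivalence classes, not well-definedness of $\Phi$ as a map on $\FF_{C_1,C_2}$ itself. Any fixed system of choices produces a map whose fibers are contained in single equivalence classes, and that containment is all the argument uses.
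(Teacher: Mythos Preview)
Your argument is correct and is exactly the intended one: the paper itself does not give a separate proof but simply declares the lemma an immediate consequence of Proposition~\ref{prop:chisni}, and your map $\Phi$ into $\mathcal{D}_{C_1}\times\mathcal{D}_{C_2}$ is the natural way to make that implication explicit. Your closing remark about the ambiguity in the choice of $\gamma_i$ is also on point and correctly handled---only the containment of fibers in equivalence classes is needed, not well-definedness of $\Phi$.
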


Let $X$ and $X'$ be two equivalent generic K3 surfaces in $F_\C$. Then we obtain two isomorphisms $\phi_1,\phi_2: X\to X'$, coming from the two equivalences between the doubled planes:
 \[\begin{tikzcd}
	& {\P^2} & \\
	X \arrow[ur, "{\gamma_1\circ p_1}"] \arrow[rr, shift left=2, "{\phi_1}"] \arrow[rr, shift right=2, swap, "{\phi_2}"] \arrow[dr, swap, "{\gamma_2\circ p_2}"] 
	& & X' \arrow[ul, swap, "{\gamma_1'\circ p_1}"] \arrow[dl, "{\gamma_2'\circ p_2}"] \\
	& {\P^2} &
\end{tikzcd}\]
We therefore obtain an automorphism $\tau:X\to X$ by setting $\tau:=\phi_1^{-1}\circ\phi_2$. We next prove that in fact $\tau$ has only finitely many possibilities. To this end, we begin with the following Lemma. 

\begin{lemma}\label{Lattice}
Let $X$ be a generic K3 surface in $F_\C$, and let $p_1, p_2 \colon X \to \P^2_\C$ denote the two natural projection maps. Define divisor classes $H_j := p_j^*\mathcal{O}_{\P^2}(1)$ for $j = 1, 2$. Then:
\begin{enumerate}
    \item[\textnormal{(a)}] The intersection pairing on $\Pic(X)$ satisfies
    \begin{equation*}
        H_1^2 = H_2^2 = 2, \qquad H_1 \cdot H_2 = 4.
    \end{equation*}

    \item[\textnormal{(b)}] The sublattice of $\Pic(X)$ generated by $H_1$ and $H_2$ is isomorphic to $\Pic(F)$ and has index dividing $4$ in $\Pic(X)$.
\end{enumerate}
\end{lemma}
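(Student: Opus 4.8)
The plan is to reduce everything to intersection theory in the ambient product $\P^2_\C\times\P^2_\C$ and then to a short lattice computation. Recall that $F_\C\subset\P^2_\C\times\P^2_\C$ is the bidegree-$(1,1)$ hypersurface $\{\sum_i x_iz_i=0\}$, and that $X$ is cut out in $F_\C$ by a section of $\L=p_1^*\O(2)\otimes p_2^*\O(2)$. Writing $h_1,h_2$ for the pullbacks to $\P^2_\C\times\P^2_\C$ of the hyperplane class $\O(1)$ from the two factors, the Chow ring is $\Z[h_1,h_2]/(h_1^3,h_2^3)$ with $\int h_1^2h_2^2=1$, the class of $F_\C$ is $h_1+h_2$, and hence the class of $X$ is $(h_1+h_2)(2h_1+2h_2)$. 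Since $H_j=h_j|_X$, part (a) becomes the evaluation $H_i\cdot H_j=\int_{\P^2_\C\times\P^2_\C}h_ih_j\,(h_1+h_2)(2h_1+2h_2)$, which yields $H_1^2=H_2^2=2$ and $H_1\cdot H_2=4$. (The equalities $H_i^2=2$ can also be seen directly: for generic $X$ each $p_i\colon X\to\P^2_\C$ is a double cover, so the projection formula gives $H_i^2=\deg(p_i)\cdot\O(1)^2=2\cdot 1=2$.)

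For part (b) I would first record the structure of $\Pic(F_\C)$: the complete flag variety of $\GL_3$ has Picard group free of rank $2$, with basis the two fundamental-weight line bundles, which are exactly the pullbacks $h_1,h_2$ of $\O(1)$ from the two Grassmannians $G(1,3)\cong G(2,3)\cong\P^2_\C$. The restriction map $\Pic(F_\C)\to\Pic(X)$ carries $h_j$ to $H_j$; because the intersection matrix of $H_1,H_2$ from part (a) has determinant $2\cdot 2-4^2=-12\neq 0$, the classes $H_1,H_2$ are linearly independent, so this restriction map is injective and identifies $\Pic(F_\C)$ with the sublattice $\langle H_1,H_2\rangle\subset\Pic(X)$.

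It then remains to bound the index. By the Picard-rank-$2$ hypothesis, $\Pic(X)$ is a rank-$2$ lattice containing $\langle H_1,H_2\rangle$ with finite index, and the standard discriminant--index identity gives $\disc\langle H_1,H_2\rangle=[\Pic(X):\langle H_1,H_2\rangle]^2\cdot\disc\Pic(X)$. Since the left-hand side is $-12$ and $\disc\Pic(X)\in\Z$, the quantity $[\Pic(X):\langle H_1,H_2\rangle]^2$ must be a perfect square dividing $12$, hence equal to $1$ or $4$; thus the index is $1$ or $2$, and in particular divides $4$.

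None of these steps is genuinely hard: the only points demanding care are the bookkeeping of the cycle class of $X$ in $\P^2_\C\times\P^2_\C$ in part (a), and the assertion that $h_1,h_2$ form an honest $\Z$-basis of $\Pic(F_\C)$ rather than merely spanning a finite-index subgroup, for which I rely on the description of the Picard group of the $\GL_3$ flag variety. Granting these, the index bound in part (b) is pure lattice theory, and the stated divisibility by $4$ follows (indeed with the sharper conclusion that the index is $1$ or $2$).
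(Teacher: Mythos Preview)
Your proposal is correct and follows essentially the same approach as the paper: part~(a) is computed via intersection theory in the ambient space (the paper works in $H^*(F,\Z)$ and cites Wehler, while you compute directly in $\P^2_\C\times\P^2_\C$, which is equivalent), and part~(b) uses the discriminant--index relation for lattices, just as the paper does by writing $H_1,H_2$ in terms of generators $L_1,L_2$ of $\Pic(X)$ via an integer matrix and comparing discriminants. Your lattice argument in~(b) in fact yields the sharper conclusion that the index is $1$ or $2$; the paper's version, working with the binary-form discriminant $b^2-4ac=48$ rather than the Gram determinant $-12$, only concludes that the index divides~$4$.
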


\begin{proof} 
We begin with Part (a).
The intersection products in $X$ can be calculated within the cohomology ring $H^*(F,\Z)$ of $F$, yielding
\begin{equation*}
\begin{array}{rcl}
H_1^2&=&(\O_F(2,2),\O_F(1,0),\O_F(1,0));\\[.1in]
H_1\cdot H_2&=&(\O_F(2,2),\O_F(1,0),\O_F(0,1));\\[.1in]
H_2^2&=&(\O_F(2,2),\O_F(0,1),\O_F(0,1)).
\end{array}
\end{equation*}
In particular, the value of these intersection numbers are independent of $X$, and are therefore as given by \cite[Proposition 2.6]{Wehler}.

We move on to Part (b). Let $L_1$ and $L_2$ be generators of the lattice $\Pic(X)$. Define the integral quadratic form $f(x, y) := (xL_1 + yL_2) \cdot (xL_1 + yL_2)$ given by the intersection pairing. Since $H_1$ and $H_2$ are also linearly independent elements in $\Pic(X)$, there exists a $2 \times 2$ matrix $\gamma$ with integral entries such that
\begin{equation*}
\gamma \cdot f(x, y) = (xH_1 + yH_2) \cdot (xH_1 + yH_2) = 2(x^2 + 4xy + y^2).
\end{equation*}
Comparing discriminants on both sides and using the invariance of the discriminant under change of basis implies that $\det(\gamma)^2 \mid 48$, and hence $\det(\gamma) \mid 4$.
This concludes the proof of the lemma.
\end{proof}

We now have the following proposition.
\begin{proposition}\label{prop:tau_choices}
Let $X$ and $X'$ be two equivalent generic K3 surfaces in $F_\C$. Let $\tau\in\Aut(X)$ be the corresponding automorphism of $X$. Then $\tau$ has finitely many options.
\end{proposition}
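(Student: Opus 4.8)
The plan is to reduce the statement to two independent finiteness claims: first, that the induced isometry $\tau^*$ of the Picard lattice $\Pic(X)$ can take only finitely many values; and second, that only finitely many automorphisms of a K3 surface can induce a prescribed isometry of $\Pic(X)$. To fix notation, write $H_j^X := p_j^*\O_{\P^2}(1)$ and $H_j^{X'}$ for the corresponding classes on $X'$, so that by Lemma \ref{Lattice} each of $\Pic(X)$ and $\Pic(X')$ is a hyperbolic rank-$2$ lattice in which $H_1,H_2$ satisfy $H_1^2=H_2^2=2$ and $H_1\cdot H_2=4$. The first move is to translate the two double-plane equivalences defining $\phi_1$ and $\phi_2$ into statements about divisor classes. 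Since $\phi_1$ is an equivalence of double planes for the maps $p_1$, intertwining $\gamma_1\circ p_1$ on $X$ with $\gamma_1'\circ p_1$ on $X'$ (where the linear automorphisms $\gamma_1,\gamma_1'$ of $\P^2$ act trivially on $\O_{\P^2}(1)$), one gets $\phi_1^*H_1^{X'}=H_1^X$, and symmetrically $\phi_2^*H_2^{X'}=H_2^X$.

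Granting this, since $\tau^*=\phi_2^*\circ(\phi_1^*)^{-1}$ and each $\phi_i^*$ is a lattice isometry, I compute $\tau^*(H_1^X)=\phi_2^*(H_1^{X'})$, which satisfies $(\tau^*H_1^X)^2=(H_1^{X'})^2=2$ and $\tau^*H_1^X\cdot H_2^X=H_1^{X'}\cdot H_2^{X'}=4$. The key point is that the set $\{D\in\Pic(X):D^2=2,\ D\cdot H_2^X=4\}$ is finite: the condition $D\cdot H_2^X=4$ confines $D$ to an affine line, and since $\Pic(X)$ has signature $(1,1)$ with $(H_2^X)^2=2>0$, the orthogonal complement of $H_2^X$ is negative definite, so $D\mapsto D^2$ restricts to a downward-opening parabola on that line and attains the value $2$ at finitely many lattice points. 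Hence $\tau^*H_1^X$ lies in a finite set. Using only that $\tau^*$ is an isometry, once $\tau^*H_1^X$ is fixed the class $\tau^*H_2^X$ satisfies $(\tau^*H_2^X)^2=2$ and $\tau^*H_2^X\cdot(\tau^*H_1^X)=H_2^X\cdot H_1^X=4$, so by the same argument it too lies in a finite set. Because $H_1^X,H_2^X$ span $\Pic(X)\otimes\Q$ (they generate a finite-index sublattice by Lemma \ref{Lattice}(b)) and $\tau^*$ preserves the integral lattice, $\tau^*|_{\Pic(X)}$ is determined by its values on $H_1^X,H_2^X$, and therefore ranges over a finite set.

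It remains to bound the number of automorphisms $\tau$ inducing a fixed isometry of $\Pic(X)$. Here I would invoke the global Torelli theorem for K3 surfaces: the natural representation $\Aut(X)\to\mathrm{O}(\Pic(X))$ has finite kernel, because $\Aut(X)$ acts on the transcendental lattice $T(X)$ through a finite cyclic group while the joint action on $H^2(X,\Z)$ is faithful. Consequently the preimage of the finite set of possible $\tau^*|_{\Pic(X)}$ is a finite union of cosets of a finite group, hence finite, which yields the proposition. I expect this last step to be the main obstacle and the place demanding the most care: for these Wehler-type K3 surfaces $\Aut(X)$ is typically infinite, so no crude finiteness is available, but the infinitude is confined to the image inside the infinite group $\mathrm{O}(\Pic(X))$. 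One must cite the correct form of the finiteness of the transcendental action --- valid precisely because $X$ is projective with Picard rank $2$ --- in order to conclude that the kernel, and hence the set of admissible $\tau$, is finite.
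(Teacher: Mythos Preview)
Your proof is correct and follows the same overall architecture as the paper's: first constrain the isometry $\tau^*$ of $\Pic(X)$ to a finite set, then invoke finiteness of the kernel of $\Aut(X)\to\mathrm{O}(\Pic(X))$. The paper cites exactly the K3 result you use for the second step.

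The genuine difference is in how you pin down $\tau^*$. The paper writes $(\tau^{-1})^*H_i=a_{i1}H_1+a_{i2}H_2$ and bounds $a_{22}$ by tracking integral change-of-basis matrices $M,M'$ between $\{H_i\}$ and generators of $\Pic(X),\Pic(X')$; the relation $N=M'M^{-1}$ forces $a_{22}=\det(N)$ into a finite set, after which the isometry conditions finish the job. You instead extract the single linear constraint $\tau^*H_1^X\cdot H_2^X=4$ directly: since $\phi_2^*$ is an isometry sending $H_2^{X'}$ to $H_2^X$, one has $\tau^*H_1^X\cdot H_2^X=\phi_2^*H_1^{X'}\cdot\phi_2^*H_2^{X'}=H_1^{X'}\cdot H_2^{X'}=4$. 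Combined with $(\tau^*H_1^X)^2=2$ and the signature-$(1,1)$ observation that the affine line $\{D:D\cdot H_2=4\}$ is negative-definite in direction, this gives finiteness immediately. Your route is shorter and avoids the matrix bookkeeping; it uses Lemma~\ref{Lattice}(b) only to know that $H_1,H_2$ span $\Pic(X)\otimes\Q$, not the index bound itself. One small remark: the finiteness of the kernel of $\Aut(X)\to\mathrm{O}(\Pic(X))$ holds for any projective K3 surface, so the Picard-rank-$2$ hypothesis is not what makes that step go through (it is used elsewhere to ensure $\Pic(X)$ has rank exactly $2$).
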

\begin{proof}
First note that the equivalence of $X$ and $X'$ (with $\tau$ being the associated automorphism) implies the existence of elements $\gamma_1,\gamma_2,\gamma_1',\gamma_2'\in\GL_3(\C)$ and an isomorphism $\phi:X\to X'$ such that
\begin{enumerate}
\item The morphisms $\gamma_1\circ p_1$ and $\gamma_1'\circ p_1\circ\phi$ from $X$ to $\P^2_\C$ are the same, with branch curves $C_1$.
\item The morphisms $\gamma_2\circ p_2$ and $\gamma_2'\circ p_2\circ\phi\circ\tau$ from $X$ to $\P^2_\C$ are the same, with branch curve $C_2$.
\end{enumerate}
For $i\in\{1,2\}$, let $H_i$ denote the pullbacks $p_i^*\cO_{\P^2}(1)$ on $X$, and let $H_i'$ denote the pullbacks $p_i^*\cO_{\P^2}(1)$ on $X'$. We have $\phi^*H_1'$ and $H_1$ are linearly equivalent, and $\phi^*H_2'$ and $(\tau^{-1})^* H_2$ are linearly equivalent. (This follows from the definitions of the maps, and Facts 1 and 2 above, in conjunction with the fact that for any $\gamma\in\GL_3(\C)$, $\gamma^*\O_{\P_2}(1)$ is linearly equivalent to $\O_{\P^2}(1)$.)

The automorphism $\tau$ of $X$ gives an action $\tau^*$ on $\Pic(X)$, and moreover the kernel of the natural map $\Aut(X)\to\Aut(\Pic(X))$ is finite by \cite[Chapter 15, Theorem 2.6]{K3Global}. The action of $\tau^*$ on $\Pic(X)$ is determined by the values of $\tau^*(H_1)$ and $\tau^*(H_2)$. Therefore, it suffices to show that there are only finitely many possible possibilities for the pair $(\tau^*H_1,\tau^*H_2)$. We write
\begin{equation*}
(\tau^{-1})^*H_1=a_{11}H_1+a_{12}H_2,\quad (\tau^{-1})^*H_2=a_{21}H_1+a_{22}H_2,
\end{equation*}
for values $a_{ij}\in\Q$. By Part (b) of Lemma \ref{Lattice}, we know that the matrix $A=(a_{ij})_{1\leq i,j\leq 2}$ has determinant in $\pm\{1/4,1/2,1,2,4\}$ and coefficients in $\frac{1}4\Z$. Moreover, $\tau$ and $\tau^{-1}$, being automorphisms, preserve the intersection pairing on $\Pic(X)$. Consider the quadratic form $f(x,y) := (xH_1+yH_2)\cdot(xH_1+yH_2)$. We have $f(x,y) = 2(x^2+4xy+y^2)$ by Part (a) of Lemma \ref{Lattice}, and $A$ belongs to the orthogonal group of this quadratic form. 

To further constrain the possible values of $A$, we proceed as follows. Let $L_1'$ and $L_2'$ generate $\Pic(X')$. Since $\phi\colon X\to X'$ is an isomorphism, the pullbacks $L_1=\phi^*(L_1')$ and $L_2=\phi^*(L_2')$ generate $\Pic(X)$. By Lemma~\ref{Lattice}(b), there are integral $2\times2$ matrices $M,M'$ with $|\det(M)|,|\det(M')|\mid4$ such that
\begin{equation*}
M\begin{pmatrix}L_1\\L_2\end{pmatrix}
=
\begin{pmatrix}H_1\\H_2\end{pmatrix},\quad
M'\begin{pmatrix}L_1'\\L_2'\end{pmatrix}
=
\begin{pmatrix}H_1'\\H_2'\end{pmatrix}.
\end{equation*}
Applying $\phi^*$ to the second equation and combining with the first gives a matrix $N = M'M^{-1}\in\GL_2(\Q)$ such that
\begin{equation*}
N\begin{pmatrix}H_1\\H_2\end{pmatrix}
=
\phi^*\!\begin{pmatrix}H_1'\\H_2'\end{pmatrix}
=
\begin{pmatrix}H_1\\(\tau^{-1})^*H_2\end{pmatrix}.
\end{equation*}
Since we have
\begin{equation*}
N = \begin{pmatrix}1&0\\ a_{21}&a_{22}\end{pmatrix},\qquad m,n\in\Q,
\end{equation*}
it follows that $a_{22}=\det(N)$ has only finitely many possibilities. Combining this with the intersection pairing property $(\tau^{-1})^* H_2 \cdot (\tau^{-1})^* H_2 = 2$, additionally gives $f(a_{21},a_{22}) = 2$. Therefore, $a_{21}$ also has only finitely many possibilities.

Recall that $\det(A)$ is constrained to lie in a finite set. Once $a_{21}$ and $a_{22}$ are fixed, we note that the condition $f(a_{11},a_{12})=2$ (implied by the invariance of the intersection pairing under), constrains the pair $(a_{11},a_{12})$ to also belong to a finite set. Indeed, $f(x,y)$ is irreducible, and so the intersection of $f(x,y)=2$ with any line has only finitely many solutions $(x,y)$. Therefore, $A$ has only finitely many possibilities, and we are done.
\end{proof}

We are now ready to prove our main result.

\medskip

\noindent {\bf Proof of Theorem \ref{thm:FFCC_fin}:} From Lemma \ref{lem:FCC_finite_eq_classes}, we see that $\FF_{C_1,C_2}$ consists of finitely many equivalence classes. We will show that each equivalence class consists of at most finitely many $\GL_3(\C)$-orbits. Let $X$ be an element in $\FF_{C_1,C_2}$, and let $X'\in\FF_{C_1,C_2}$ be equivalent to $X$. Then we have the diagram
\[\begin{tikzcd}
	& {\P^2} & \\
	X \arrow[ur, "{\gamma_1\circ p_1}"] \arrow[rr, shift left=2, "{\phi\circ\tau}"] \arrow[rr, shift right=2, swap, "{\phi}"] \arrow[dr, swap, "{\gamma_2\circ p_2}"] 
	& & X' \arrow[ul, swap, "{\gamma_1'\circ p_1}"] \arrow[dl, "{\gamma_2'\circ p_2}"] \\
	& {\P^2} &
\end{tikzcd}\]
where $\phi:X\to X'$ is an isomorphism and $\tau$ has finitely many options. We will show that the $\GL_3(\C)$-orbit of $X'$ is determined by $\tau$.

We begin with the following simplifications. First, note that by replacing $C_1$ and $C_2$ by $\gamma^{-1}_1C_1$ and $\gamma^{-1}_2C_2$, respectively, we can assume that $\gamma_1=\gamma_2=1$. Second, by replacing $X'$ with $ \gamma_2' \cdot X'$, we may assume that $\gamma_2'=1$. We set $\gamma_1'=\gamma^{-1}$ to simplify notation. We have used up the $\GL_3(\C)$-action by changing $X'$, and it is now required to prove that there are finitely many possibilities for $X'$. Note that $X'$ is determined by $\gamma$, and also that $\tau$ has finitely many options (by Proposition \ref{prop:tau_choices}). We will prove that $\gamma$ is determined by $\tau$, and this will finish the proof.

Let $\alpha \in \P^2_\C \setminus C_1$. Then there are exactly two points of $X$ mapping to $\alpha$ (via $p_1$). We regard $X$ and $X'$ are closed subsets of $\P^2_\C\times\P^2_\C$, and denote these two points by $(\alpha, \beta)$ and $(\alpha, \beta')$. Applying the automorphism $\tau$ to these points yields
\begin{equation*}
 \tau((\alpha, \beta)) = (\alpha_1, \beta_1), \quad \tau((\alpha, \beta')) = (\alpha_1', \beta_1').   
\end{equation*}
These two points are distinct since $\tau$ is an automorphism of $X$. Using the commutativity of the upper triangle in the diagram, we have
\begin{equation*}
\gamma^{-1} \circ p_1 \circ \phi((\alpha_1, \beta_1)) = p_1(\alpha, \beta) = \alpha, \quad
\gamma^{-1} \circ p_1 \circ \phi((\alpha_1', \beta_1')) = p_1(\alpha, \beta') = \alpha.
\end{equation*}
Similarly, the commutativity of the lower triangle gives
\begin{equation*}
p_2 \circ \phi((\alpha_1, \beta_1)) = \beta_1, \quad p_2 \circ \phi((\alpha_1', \beta_1')) = \beta_1'.
\end{equation*}
Therefore,
\begin{equation*}
 \phi((\alpha_1, \beta_1)) = (\gamma \alpha, \beta_1), \quad \phi((\alpha_1', \beta_1')) = (\gamma \alpha, \beta_1').   
\end{equation*}
Since these points lie on $X'$, which is contained in $F_\C$, the point $\gamma\alpha$ must lie on both lines defined by $\beta_1$ and $\beta_1'$. Moreover, $\beta_1 \ne \beta_1'$, because $\phi$ is an isomorphism and $(\alpha_1, \beta_1) \ne (\alpha_1', \beta_1')$. Hence, $\gamma \alpha$ is the unique point of intersection of the two distinct lines corresponding to $\beta_1$ and $\beta_1'$, and is thus uniquely determined by $\tau$. Since this is true for all $\alpha\in\P^2_\C\backslash C_1$, this implies that the projective transformation $\gamma=\gamma_\tau$ itself is uniquely determined by $\tau$. Therefore, we are left with the commutative diagram

\[
\begin{tikzcd}
& \P^2 \\
X \arrow[ur, "p_1"] \arrow[rr, shift left=2, "\phi \circ \tau"] \arrow[rr, shift right=2, swap, "\phi"] \arrow[dr, swap, "p_2"] 
&& X' \arrow[ul, swap, "\gamma_\tau^{-1} \circ p_1"] \arrow[dl, "p_2"] \\
& \P^2
\end{tikzcd}
\]
and hence $X'$ in $\P^2_\C \times \P^2_\C$, given by
\begin{equation*}
(\gamma_\tau \circ p_1 \circ \tau^{-1},\ p_2)(X),
\end{equation*}
is determined by $\tau$, as necessary. 
$\Box$

\subsection{Proof of Theorem \ref{thm22}:} We are ready to prove the final main theorem.
Recall that $K$ is a number field, and $S$ is a finite set of places of $K$ including all the archimedean places. Let $f\in V_{2,2}(\O_{K,S})$ be an element corresponding to the generic K3 surface $X_{f}$, having good reduction outside $S$. Recall that $X_{f}$ embeds into the flag variety $F_K$, and that this yields two double covers $p_1$ and $p_2$ from $X_f$ to $\P^2_K$. Denote the branch curves of these two maps by $C_{1,f}$ and $C_{2,f}$. By Proposition \ref{prop_branch_covariants}, $C_{1,f}$ and $C_{2,f}$ are cut out by the covariants $I_x(f)$ and $I_z(f)$, respectively.

Let $\frak{p}\not\in S$ be a prime of $K$. We let $\F_{\frak{p}}$ denote $\O_K/\frak{p}$, and let $\bar{f}\in V_{2,2}(\F_{\frak{p}})$ denote the reduction of $f$ modulo $\frak{p}$. By assumption, $X_{\bar{f}}$ in $\P^2_{\F_\pp}$ is a generic K3 surface. Now $I_x$ and $I_z$ have integer coefficients by Proposition \ref{prop_const_covariants}. Hence, the two (smooth) branch curves corresponding to $X_{\bar{f}}$ are cut out by the reduction mod $\pp$ of $I_x(f)$ and $I_z(f)$. Therefore, $C_{1,f}$ and $C_{2,f}$ have good reduction outside $f$, and so the $\GL_3(\cO_{K,S})$-orbits of $I_x(f)$ and $I_z(f)$ are determined (by Theorem \ref{thm2}) upto finitely many choices.

Applying Theorem \ref{thm:FFCC_fin}, we see that the $\GL_3(\C)$-equivalence class $X_f$, and therefore the $\GL_3(\C)$-equivalence class of $f$, has only finitely many possibilities. Theorem \ref{thm22} now follows from an argument identical to the proof of Theorem \ref{thm2}. $\Box$

\bibliographystyle{plain}  
\bibliography{Reference}

\end{document}